\newtheorem{thm}{Theorem}[section]
\newtheorem{lem}[thm]{Lemma}
\theoremstyle{definition}
\newtheorem{defn}[thm]{Definition}
\theoremstyle{remark}
\let\c@equation\c@thm
\numberwithin{equation}{section}
\newcommand{\R}{\mathbb{R}}
\newcommand{\C}{\mathbb{C}}
\title{Regular Polygon Surfaces}
\author{\href{http://www.dam.brown.edu/people/ialevy/}{Ian M. Alevy}}
\address[I. Alevy]{Division of Applied Mathematics, Brown University, Providence, RI 02912}
\email[I. Alevy]{ian\_alevy@brown.edu}
\date{\today}
\begin{document}

\begin{abstract}
A \emph{regular polygon surface} \(M\) is a surface graph \((\Sigma, \Gamma)\) together with a continuous map \(\psi\) from \(\Sigma\) into Euclidean 3-space which maps faces to regular Euclidean polygons. When \(\Sigma\) is homeomorphic to the sphere and the degree of every face of \(\Gamma\) is five, we prove that \(M\) can be realized as the boundary of a union of dodecahedra glued together along common facets. Under the same assumptions but when the faces of \(\Gamma\) have degree four or eight, we prove that \(M\) can be realized as the boundary of a union of cubes and octagonal prisms glued together along common facets. We exhibit counterexamples showing the failure of both theorems for higher genus surfaces.
\end{abstract}

\maketitle

\tableofcontents 

\section{Introduction}

 \begin{figure}
    \centering
    \begin{subfigure}[b]{0.4\textwidth}
        \includegraphics[width=\textwidth]{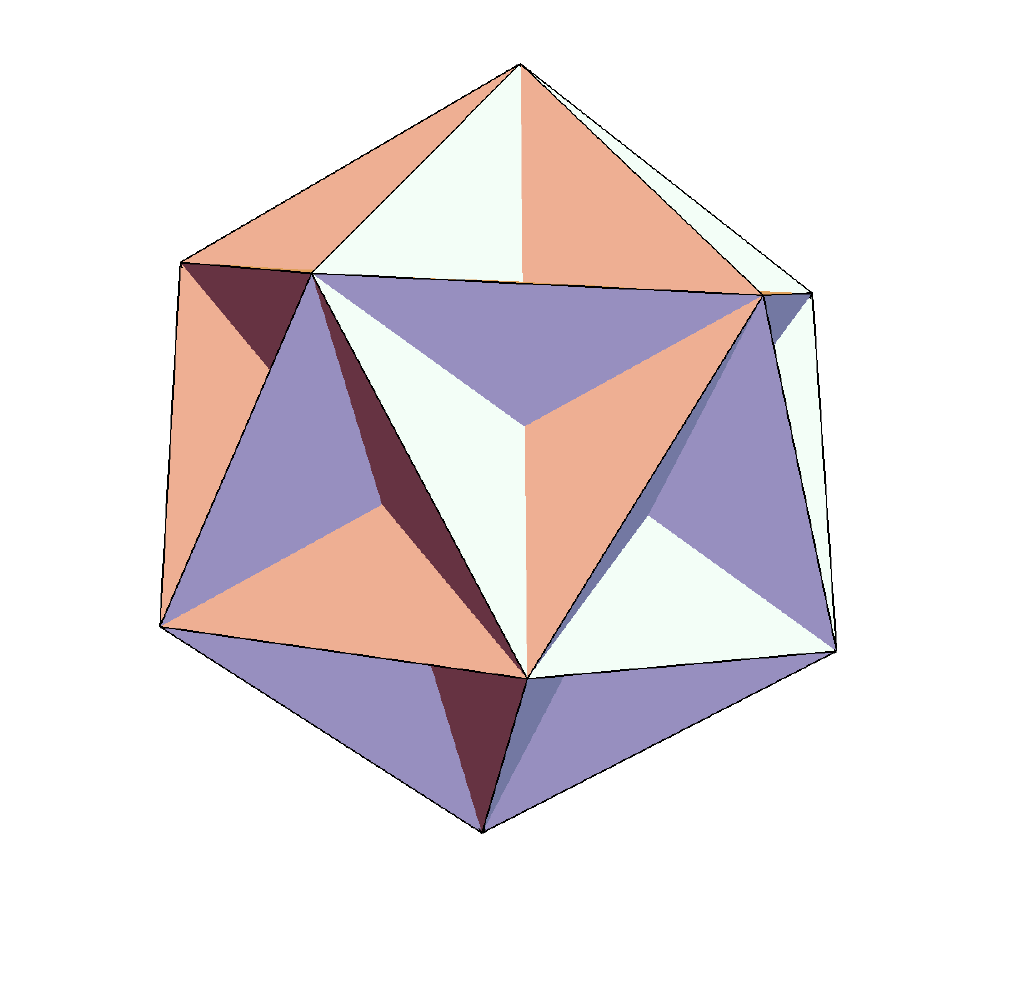}
        \caption{Great dodecahedron}
        \label{fig:great-dodec}
    \end{subfigure}
\qquad
    ~ 
    \begin{subfigure}[b]{0.4\textwidth}
        \includegraphics[width=\textwidth]{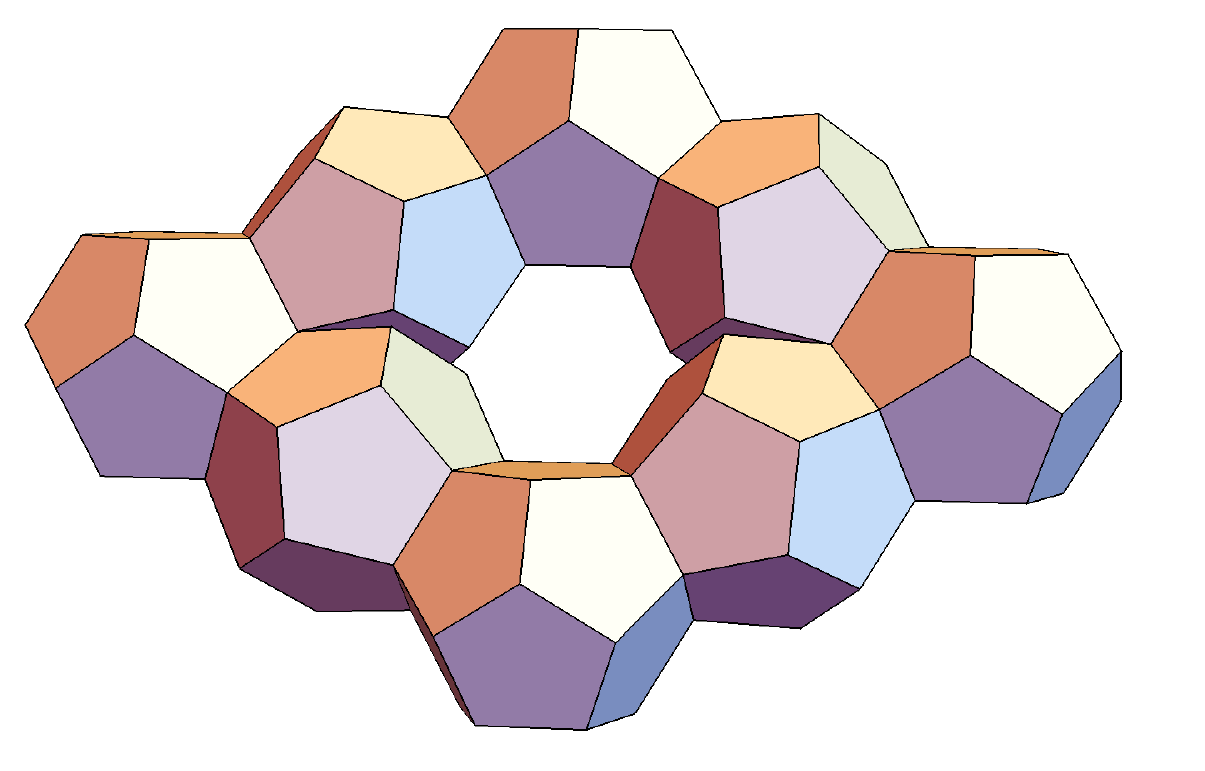}
        \caption{Dodecahedral torus}
        \label{fig:dodec-torus}
    \end{subfigure}
    ~ 

    \caption{Regular polygon surfaces with degree five faces}\label{fig:surfs}
\end{figure}

We study surfaces built by gluing regular and rigid Euclidean polygons together along their edges.
Recently surfaces built out of regular polygons with boundary have been used to build flexible metamaterials that can be deformed into various configurations \cite{o2017}. Very little is known about the space of shapes of these generalized polyhedra, called \emph{regular polygon surfaces} (RPSs) (see definition \ref{def:rps}),  which are neither convex nor symmetric. We prove that under certain assumptions on the genus and face degrees, RPSs can be realized as a union of Platonic solids glued along common facets. Before giving a rigorous definition of a RPS, we introduce some terminology from graph theory.

\begin{defn}\label{def:surface-graph}
	A \emph{surface graph} \((\Sigma, \Gamma)\) is a graph \(\Gamma\) embedded on a closed surface \(\Sigma\) in such a way that \(\Sigma\setminus \Gamma\) is a union of connected complementary components called \emph{faces} with each face homeomorphic to the \(2\)-cell. If in addition the closure of each face is homeomorphic to the closed \(2\)-cell, we call the surface graph a \emph{regular surface graph}. When the intersection of the closure of any two faces is either empty, a vertex in \(\Gamma\), or an edge in \(\Gamma\) we say that the surface graph is \emph{proper}. 
	\end{defn}	
The \emph{degree} of a face in a surface graph is the number of edges incident to that face.
	\begin{defn}\label{def:rps}
	Let \((\Sigma,\Gamma)\) be a finite, regular, and proper surface graph in which \(\Sigma\) is a genus \(g\) surface. A genus \(g\) \emph{regular polygon surface} (RPS) is a triple \((\Sigma, \Gamma, \psi)\) whose \emph{geometric realization} \(\psi: \Sigma \to \R^3\) is continuous and maps a face of degree \(k\) to a regular Euclidean \(k\)-gon with unit edge lengths. To rule out degenerate RPSs, we assume that the intersection of the image under \(\psi\) of adjacent faces in the graph is either one vertex or one edge and its two incident vertices. If all of the face degrees are contained in the set \(\{k_1, \ldots, k_n\}\), then we call the surface a \((k_1, \ldots , k_n)\)-RPS.
	\end{defn}
	We allow geometric realizations which are not \emph{embeddings}, i.e., \(\psi\) may not be injective and the geometric realization of the surface may have self-intersections. 
	
	In figure \ref{fig:surfs} we show two examples of RPSs. The familiar Platonic solids as well as the Kepler-Poinsot polyhedra \cite{c1963} are all examples of RPSs. One way to build more complicated RPSs is to \emph{glue} two RPSs together along a common facet when both surfaces have facets with the same number of incident edges. To be precise, suppose \(P\) and \(Q\) are RPSs which both have a face of degree \(n\) and let \(f_p\) and \(f_q\) denote the respective faces. After cutting out the interior of \(f_p\) from \(P\) and the interior of \(f_q\) from \(Q\) we can glue the two surface graphs together along their boundaries (in an orientation-reversing way). The new RPS inherits a geometric realization in the obvious way from the geometric realizations of the original two RPSs. An example of a RPS constructed in this fashion is the \emph{dodecahedral torus} shown in figure \ref{fig:dodec-torus}.

When the genus is low enough the space of RPSs is constrained and we are able to prove the following three theorems.

\begin{thm}\label{pent}
Every oriented genus \(0\) or \(1\), \((5)\)-RPS can be realized as the boundary of a union of dodecahedra glued together along common facets.
\end{thm}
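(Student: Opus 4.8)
The plan is to reduce the global statement to a local rigidity fact about a single vertex and then propagate it across the surface using the topological hypothesis. The cornerstone is the following corner lemma: if three unit regular pentagons share a common vertex \(v\) with pairwise disjoint interiors, then the three edges emanating from \(v\) form a triple of unit vectors whose pairwise angles are all \(108^\circ\), a configuration that is unique up to isometry. Consequently the three pentagons are congruent to three faces meeting at a vertex of a regular dodecahedron, and in particular the dihedral angle along each of the three shared edges equals the dodecahedral angle \(\theta_D = \arccos(-1/\sqrt5)\). Because a regular dodecahedron is determined by any one of its vertex corners (equivalently, by any two adjacent faces together with the angle between them), each degree-\(3\) vertex of the RPS canonically determines a dodecahedron \(D_v\), placed on the side of the surface prescribed by the orientation.

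First I would show that degree-\(3\) vertices are plentiful. Writing \(\delta(v) = 2\pi - \tfrac{3\pi}{5}\deg(v)\) for the angle defect at \(v\), discrete Gauss--Bonnet gives \(\sum_v \delta(v) = 2\pi\,\chi(\Sigma) = 2\pi(2-2g)\). For \(g=0\) this forces \(\sum_v\delta(v) = 4\pi\), so with \(n_k\) the number of degree-\(k\) vertices one gets \(n_3 = 20 + 2n_4 + 5n_5 + \cdots \ge 20\); for \(g=1\) it gives \(\sum_v\delta(v)=0\), i.e. \(n_3 = 2n_4 + 5n_5 + \cdots\), and since a pentagonal torus must contain a saddle vertex of degree \(\ge 4\), convex corners again exist. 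The corner lemma then upgrades to the statement that \emph{every edge incident to a degree-\(3\) vertex has dihedral angle exactly \(\theta_D\)}, and any two RPS-faces sharing such an edge are faces of one and the same dodecahedron. Grouping faces by the equivalence relation generated by these \(\theta_D\)-edges partitions most of the surface into patches, each contained in a single reconstructed dodecahedron \(D_v\); the remaining edges, whose two endpoints both have degree \(\ge 4\), are exactly the seams where distinct dodecahedra are glued, and these carry the non-dodecahedral dihedral angle produced by removing a shared facet.

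The substance of the proof, and the step I expect to be the main obstacle, is showing that the reconstructed dodecahedra fit together globally into a union whose boundary is precisely \(M\). I would argue by induction on the number of faces, peeling off a dodecahedron \(D_v\) at a convex corner: one must verify that the visible faces of \(D_v\) form a cap attached to the rest of \(M\) along a union of shared facets, that filling in those facets yields a valid RPS of no greater genus, and that the orientation keeps the inside/outside assignment coherent. The genus hypothesis enters precisely here---as one transports the dodecahedral structure around loops on \(\Sigma\), a monodromy obstruction can in principle prevent the local pieces from closing up into honest solids, and the point is that for \(g \le 1\) this obstruction always vanishes (the dodecahedral torus being the extremal \(g=1\) case of a cycle of glued solids), whereas for \(g \ge 2\) it need not, which is the source of the counterexamples. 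Controlling this monodromy, while simultaneously taming the degree-\(\ge 4\) seam vertices---whose local configurations are \emph{not} individually rigid and acquire their rigidity only from the surrounding global structure---is where the genus \(\le 1\) assumption does its essential work.
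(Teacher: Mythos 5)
Your local analysis is sound and matches the paper's starting point: a degree-\(3\) vertex in a \((5)\)-RPS is rigid and is a dodecahedral corner, and the Gauss--Bonnet counts (\(n_3 \ge 20\) for \(g=0\), \(n_3 = 2n_4 + 5n_5 + \cdots\) for \(g=1\)) are correct. But the proposal has a genuine gap, and you name it yourself: the entire content of the theorem is the step you defer, namely that the locally reconstructed dodecahedra assemble globally. Asserting that a ``monodromy obstruction'' vanishes for \(g \le 1\) is a restatement of the theorem, not an argument; nothing in the proposal explains why it vanishes, and the great dodecahedron (genus 4, all faces pentagons) shows the claim is delicate. The concrete difficulty is this: a degree-\(3\) vertex pins down only \emph{three} faces of its dodecahedron \(D_v\), and nothing guarantees that \(D_v\) meets the surface in more than those three faces. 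So your induction cannot even begin --- ``peeling off \(D_v\)'' is not a well-defined simplification, because the visible part of \(D_v\) may be too small a patch for its boundary to be a cycle along which surgery produces a smaller valid RPS. Likewise, your equivalence relation generated by \(\theta_D\)-edges may have classes that are not contained in any single dodecahedron; proving that it does is again the whole problem.

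The paper closes exactly this gap with a mechanism your proposal lacks: \emph{facial} curvature rather than vertex counting. Lemma \ref{pos-curv face} shows that a face of positive facial curvature has all five vertices of degree \(3\) (so one gets a dodecahedral cap of six faces at once, not three), and then a global accounting argument shows that some positive-curvature face must have a positive-curvature face nearby --- otherwise each positive face sits in a disjoint region of total curvature at most \(-2\pi/3\), contradicting total curvature \(4\pi\). This produces \emph{seven} contiguous faces forming a hemisphere of a dodecahedron, which polyhedral surgery swaps for the complementary five-face cap, reducing the face count and contradicting minimality of a counterexample. Note also that your genus-\(1\) treatment is incomplete in a second way: when \(g=1\) it can happen that \emph{every} face has zero facial curvature (three degree-\(3\) and two adjacent degree-\(4\) vertices per face), so no positive-curvature face exists at all; the paper handles this by a separate combinatorial analysis of that rigid configuration, and your ``saddle vertex exists, hence convex corners exist'' remark does not engage with it. To repair your proposal you would need, at minimum, a lemma of the strength of Lemma \ref{pos-curv face} plus the curvature-accounting step that forces a large dodecahedral patch, at which point you would essentially be reproducing the paper's proof.
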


\begin{thm}\label{pent-n}
The only possible oriented genus \(0\), \((5,7,8,9,10)\)-RPSs are those which can be realized as the boundary of a union of dodecahedra glued together along common facets.
\end{thm}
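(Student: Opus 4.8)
The plan is to reduce Theorem \ref{pent-n} to Theorem \ref{pent} by proving that a genus \(0\) \((5,7,8,9,10)\)-RPS cannot contain any face of degree \(7\), \(8\), \(9\), or \(10\); once every face is a pentagon the surface is a genus \(0\) \((5)\)-RPS and Theorem \ref{pent} delivers the conclusion. So the entire content is an impossibility statement about large faces, and I would attack it through the local geometry of vertex stars together with a global Gauss--Bonnet bookkeeping.

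First I would classify the admissible vertex stars. At a vertex \(v\) the incident faces fold into a cone whose link is a closed spherical polygon, and their interior angles must sum to a total angle \(\theta_v\) realizing such a polygon. Since every allowed face has interior angle at least \(108^\circ\), any vertex of valence at least \(4\) has \(\theta_v \geq 432^\circ > 360^\circ\) and hence strictly negative angular defect, while a trivalent vertex forms an honest convex corner exactly when its three interior angles satisfy the spherical triangle inequalities and sum to less than \(360^\circ\). Running through the interior angles \(108^\circ, (900/7)^\circ, 135^\circ, 140^\circ, 144^\circ\) attached to degrees \(5,7,8,9,10\) shows the only admissible trivalent stars are \((5,5,5)\), \((5,5,7)\), \((5,5,8)\), \((5,5,9)\), and the flat star \((5,5,10)\). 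In particular every trivalent vertex carries at least two pentagons, and a face of degree at least \(7\) meets a trivalent vertex only in a star of the form \((5,5,n)\).

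The heart of the argument is a dihedral-angle rigidity. At a trivalent \((5,5,n)\) star the two pentagons share an edge, and the spherical law of cosines pins down the dihedral angle along that edge as a function of \(n\) alone; these values are all distinct, namely approximately \(116.6^\circ\) for \(n=5\) (the dodecahedral angle), \(142.6^\circ\) for \(n=7\), \(152.5^\circ\) for \(n=8\), \(162.3^\circ\) for \(n=9\), and exactly \(180^\circ\) for \(n=10\). Because a shared pentagon--pentagon edge has a single well-defined dihedral angle, the stars at its two endpoints must be compatible, and propagating this constraint around a putative large face forces a ring of pentagons whose outer pentagon--pentagon edges can be closed off only by further large faces of the \emph{same} degree. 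Thus a single face of degree at least \(7\) seeds a configuration that never reaches the dodecahedral angle \(116.6^\circ\). I would convert this into a contradiction with a discharging count: assign each vertex its angular defect (total \(4\pi\) for genus \(0\)), observe that only \((5,5,5)\) vertices carry the maximal defect \(36^\circ\) while every large-face star carries strictly less and the flat \((5,5,10)\) star carries none, and show that the forced pentagon rings with their non-dodecahedral dihedral angles are incompatible with closing up into a sphere.

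The main obstacle is the analysis of the higher-valence "saddle" vertices with \(\theta_v > 360^\circ\), whose links are non-convex spherical polygons: these are precisely the configurations left untouched by the triangle-inequality bookkeeping, and they are exactly where a large face could try to hide. Showing that the dihedral-compatibility and curvature constraints still forbid any degree \(\geq 7\) face once such vertices are admitted is the step I expect to demand the most care, alongside verifying that the flat \((5,5,10)\) stars are excluded by the non-degeneracy hypothesis of Definition \ref{def:rps}, since there the two pentagons become coplanar and the star ceases to separate the surface.
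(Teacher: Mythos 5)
Your framing---reduce Theorem \ref{pent-n} to Theorem \ref{pent} by showing that no face of degree at least \(7\) can occur---is a legitimate way to state the content of the theorem, and your vertex-star classification and dihedral angles agree with the paper's Table \ref{table:angles}. But the proposal has a genuine gap exactly where you flag it, and that flagged step is the entire proof. The propagation claim---that a degree-\(n\) face forces rings of pentagons whose outer edges ``can be closed off only by further large faces of the same degree''---is false as stated: dihedral-angle rigidity only constrains an edge both of whose endpoints are trivalent, and nothing prevents vertices of degree \(4,5,6,\ldots\) (negative curvature, but perfectly admissible) from appearing around the large face, at which point no angle constraint propagates. Your proposed repair, a discharging count, is stated only as a goal (``show that the forced pentagon rings \ldots are incompatible with closing up into a sphere''); no inequality is actually derived. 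The paper supplies precisely this missing quantitative content: Lemma \ref{verts n face} (a positive-curvature large face has all vertices of type \((5^2,n)\), and a single negative-curvature vertex forces negative facial curvature), Lemma \ref{pent-face-pos} (a positive-curvature pentagon must exist), and explicit estimates in the main proof, e.g.\ that a degree-\(n\) second-generation face with two degree-four vertices has facial curvature \(-\pi(n-5)(n-1)/(15n)\), and that the curvature of \(f\) together with its first generation is at most \((5-2n)\pi/(3n)<0\).

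There is also a structural reason your plan cannot close as described: pure local counting cannot establish ``no large faces.'' A genus-zero surface may contain arbitrarily large all-pentagon regions carrying positive curvature (dodecahedral caps occur in every union of dodecahedra), and your Gauss--Bonnet bookkeeping has no mechanism to prevent such regions from compensating the negative curvature concentrated near a large face. The paper handles exactly this with a minimal-counterexample induction plus polyhedral surgery: whenever a positive-curvature pentagon has an all-pentagon second generation, or a positive-curvature first-generation neighbor, seven faces are cut out and replaced by five faces of a dodecahedron, yielding a smaller counterexample; only after this reduction is available does the curvature count give a contradiction with total curvature \(4\pi\). Some such induction/surgery mechanism must be added to your outline. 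A smaller error: the flat \((5^2,10)\) star is not excluded by the non-degeneracy clause of Definition \ref{def:rps}---two coplanar pentagons sharing an edge still intersect in exactly that edge---and indeed the paper retains it in Table \ref{table:angles}, disposing of it through its zero curvature rather than by fiat.
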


\begin{thm}\label{square-oct}
Every oriented genus \(0\), \((4,8)\)-RPS can be realized as the boundary of a union of cubes and octagonal prisms glued together along common facets.
\end{thm}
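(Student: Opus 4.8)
To prove Theorem~\ref{square-oct}, the plan is to induct on the number of faces of \(\Gamma\), peeling off one cube or octagonal prism at a time. The engine for the induction is a curvature count: since \(\Sigma\) has genus \(0\), the combinatorial Gauss--Bonnet (angle-defect) formula gives \(\sum_{v}(2\pi-\theta_v)=4\pi\), where \(\theta_v\) is the sum of the interior face angles meeting at \(v\). Every square contributes \(\pi/2\) and every octagon \(3\pi/4\), so if \(a_v\) squares and \(b_v\) octagons meet at \(v\) then \(\theta_v=\tfrac{\pi}{2}a_v+\tfrac{3\pi}{4}b_v\), and the defect is positive exactly when \(2a_v+3b_v<8\). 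Because the regularity and properness of \(\Gamma\) force every vertex to have degree \(a_v+b_v\ge 3\), the only solutions are \((a_v,b_v)=(3,0)\) and \((a_v,b_v)=(2,1)\). Hence the positive total defect guarantees at least one vertex of one of these two \emph{convex} types.

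First I would pin down the local geometry at such a vertex using the rigidity of the polygons. At a \((3,0)\) vertex the three squares are pairwise edge-adjacent, and each square forces its two edges at \(v\) to be orthogonal; thus the three unit edges emanating from \(v\) are mutually perpendicular, so the star of \(v\) is rigidly the corner of a unit cube \(C\), three of whose facets are faces of \(M\). At a \((2,1)\) vertex the octagon forces its two edges at \(v\) to meet at \(3\pi/4\) while each square forces a right angle; the three edge directions are then determined up to reflection and coincide with the corner of a unit octagonal prism \(P\), with the octagon as a base and the two squares as side facets. In either case the orientation of \(\Sigma\) distinguishes the two reflections, so the solid \(S\in\{C,P\}\) is uniquely determined and sits on the ``inside'' of \(M\) near \(v\).

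The next and most delicate step is to recognize that enough of the boundary of \(S\) already appears in \(M\) to detach it cleanly. For a cube this is immediate: three mutually adjacent facets are present, and removing \(C\) amounts to deleting these three faces and regluing the three opposite facets of \(C\) along the equatorial \(6\)-cycle of edges. For a prism the propagation is harder, and this is where I expect the real work to be. Having identified the base octagon \(O\) and two of its side squares, I would walk around the eight vertices of \(O\), at each one invoking the vertex classification (now allowing the flat types \((4,0)\) and \((1,2)\) as well as the reflex types) together with the fixed interior angle \(3\pi/4\) of the octagon and the rigidity of the incident squares, to show that all eight side squares of \(P\) are forced to be faces of \(M\). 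This identifies the full prism up to its far octagon, which the gluing construction for RPSs then lets us cap off.

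Finally I would carry out the peeling: replace the outward facets of \(S\) lying in \(M\) by the complementary facets of \(S\), obtaining a triple \((\Sigma',\Gamma',\psi')\). The main obstacle throughout is verifying that this operation again yields a genus \(0\), non-degenerate \((4,8)\)-RPS with strictly fewer faces --- that is, that \(S\) protrudes outward so that its removal neither collides with the rest of \(M\) (which may be non-embedded) nor destroys properness or the face-intersection conditions of Definition~\ref{def:rps}. I would handle this by choosing the convex corner to be extremal for a generic height function, so that \(S\) is locally outermost and the cut is along a single embedded cycle. The base cases are a single cube (\(6\) faces) and a single octagonal prism (\(10\) faces), and since \(M=M'\cup S\) is a gluing along common facets, induction finishes the proof.
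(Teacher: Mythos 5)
Your Gauss--Bonnet classification of the positive-curvature vertices (types \((3,0)\) and \((2,1)\) in your notation) and the trihedral rigidity at such a vertex are both correct, but the proof breaks at the two steps you yourself flag as delicate. The fatal one is the prism propagation. It is simply not true that a \((2,1)\) corner forces all eight side squares of the prism to be faces of \(M\): glue a unit cube onto one side square \(f\) of an octagonal prism and let \(M\) be the boundary of the union. Then along one edge of each octagon of the prism the neighboring face of \(M\) is a face of the cube lying \emph{coplanar} with that octagon, and the two endpoints of that edge are degree-four vertices of negative curvature (one octagon plus three squares). Your walk around the octagon stalls there: the vertex classification, even allowing flat and reflex types, cannot conclude that the next face is a side square, because it is not one. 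Moreover a generic height function can perfectly well select a \((2,1)\) corner of this octagon as the extremal vertex (take the height to be approximately the prism axis direction), so your algorithm genuinely gets stuck on a surface for which the theorem is true. Overcoming exactly this interleaving of squares and octagons is what the paper's global machinery is for: it classifies \emph{minimal bigons} formed by bands (lemmas \ref{lem: no sq oct}--\ref{lem: adj bands}), removes octagon bigons by octagon removal surgery, and uses cube flips and prism flips to simplify square bigons until a removable cube or prism appears.

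The second gap is the induction measure. Peeling a cube at a \((3,0)\) corner deletes three faces and glues in the three opposite facets, so the face count does not drop; it stays the same unless doubled (dangling) faces happen to cancel. A \(2\times 2\times 2\) stack of unit cubes already exhibits this: its boundary has \(24\) squares before and after a corner peel, with no cancellation. So ``strictly fewer faces'' is false and your induction does not terminate as stated. For embedded surfaces one could repair this by inducting on enclosed volume, but the paper explicitly allows non-embedded realizations, where volume requires a separate argument. The paper sidesteps both problems by inducting on the number of degree-eight faces, with the all-square case (theorem \ref{thm: square}) handled by band surgery --- an operation that removes an entire band of squares at once and genuinely decreases the face count --- and by using flips only as auxiliary moves whose progress is measured by the number of bands crossing a bigon, not by the number of faces. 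To salvage your local corner-peeling approach you would need both a correct monovariant and a replacement for the false propagation step, which in effect amounts to rebuilding the paper's bigon analysis.
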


\begin{wrapfigure}{r}{0.5\textwidth}
  \begin{center}
    \includegraphics[width=0.48\textwidth]{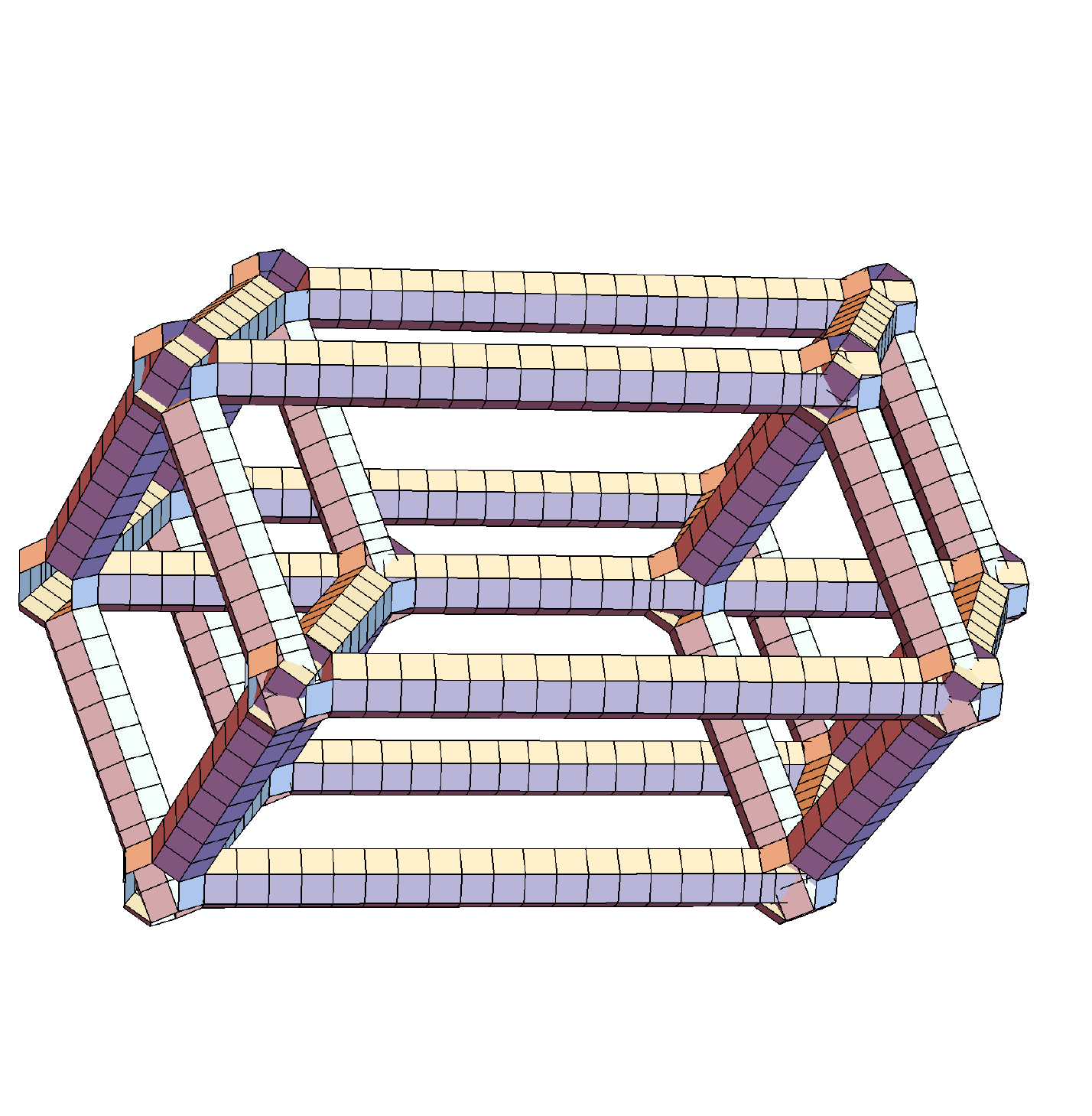}
\caption{A \((4)\)-RPS of genus 49}
\label{truncOctMol4ds}
  \end{center}
\end{wrapfigure}
Not all RPSs can be constructed by gluing together convex polyhedra. Figure \ref{truncOctMol4ds} shows a \((4)\)-RPS with genus 49 which cannot be realized as a union of cubes and prisms glued together. In section \ref{sec:counter} we explain how this surface is constructed and present two other examples of high genus RPSs which are not unions of cubes and prisms (figs. \ref{truncCubOctMol4d} and \ref{truncCubOctMolSqHex}). Note that the surfaces described in section \ref{sec:counter} cannot be embedded \(\R^3\).

The RPSs studied in this paper are examples of generalized polyhedra. While the five convex regular polyhedra known as \emph{Platonic solids} were described in Euclid's \emph{Elements}, there is no characterization of generalized polyhedra. New examples of polyhedra (in \(\R^3\)) are still being discovered (see \cite{gsw2014} for examples of regular polyhedra and \cite{gs2009} for examples of \emph{toroidal polyhedra}). Moreover, mathematicians have not reached a consensus on the definition of a generalized polyhedron. See \cite{g2003a} for a historical account of the study of polyhedra and a proposed definition of a generalized polyhedron.  Although it is generally known that there are 13 convex \emph{Archimedean polyhedra}, whether regularity is a ``local'' or ``global'' condition has resulted in a mathematical error in many enumerations of these objects \cite{g2009}. 

A few examples of generalized polyhedra were known classically. The Kepler-Poinsot great dodecahedron (figure \ref{fig:great-dodec}) is a genus four RPS with faces of degree five which is not a union of convex polyhedra since its vertex figures are the nonconvex star polygons known as pentagrams. It can be constructed in two steps by first \emph{stellating} (extending the faces symmetrically to form a new polyhedron) the dodecahedron to obtain the small stellated dodecahedron, then dualizing the polyhedron \cite{c1963}. The great dodecahedron was first depicted in a 1568 etching by Amman (see figure \ref{fig:jamnitzer}) of an engraving made by Jamnitzer \cite{j1568}.

	\begin{wrapfigure}{l}{0.4\textwidth}
  \begin{center}
    \includegraphics[width=0.38\textwidth]{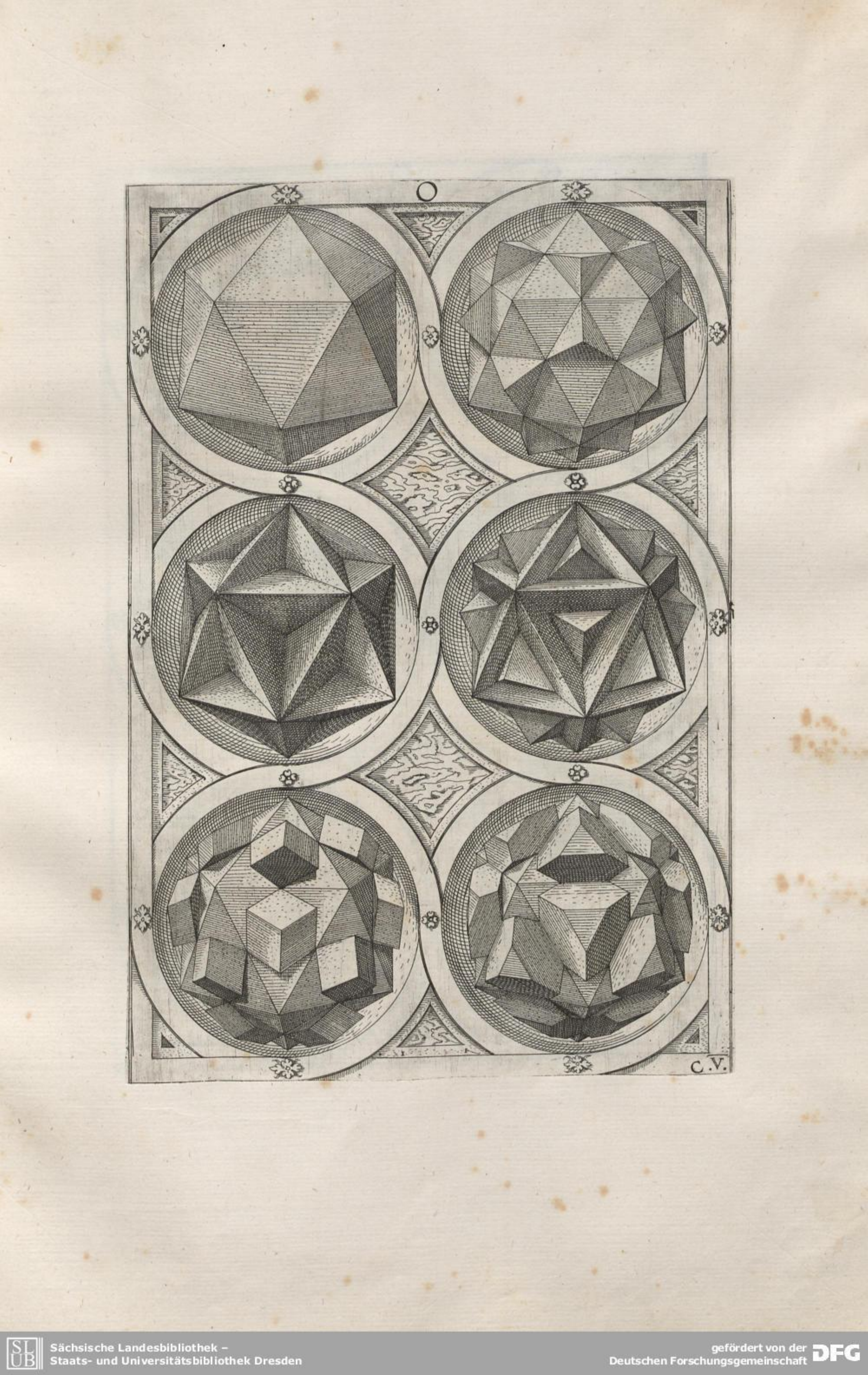}
  \end{center}
  \caption{The Renaissance etching showing the earliest known depiction of the great dodecahedron \cite{j1568}}
  \label{fig:jamnitzer}
\end{wrapfigure} 

There has been some recent work on low genus polyhedra with rectangular faces. Donoso and O'Rourke \cite{do2001} proved that a polyhedron, of genus at most one with rectangular faces, has dihedral angles which are all integer multiples of \(\pi/2\). In addition they constructed a genus seven polyhedron with rectangular faces whose dihedral angles are not integer multiples of \(\pi/2\). Their result was later extended to genus two polyhedra with rectangular faces by Biedl et al. \cite{bcd2002}. Thurston developed a global theory that describes triangulations of the sphere with at most \(6\) triangles around a vertex \cite{t1998}. He found a natural bijection between these triangulations and a quotient space of a discrete lattice in \(\C^{1,9}\). See \cite{s2015} for a readable introduction to Thurston's paper which provides alternative proofs of the main theorems in \cite{t1998}.

RPSs also have applications to statistical mechanics. In the lattice formulation of quantum gravity, physicists are naturally led to an infinite dimensional integral over the space of Riemannian metrics. By approximating a manifold by piecewise linear manifolds, such as RPSs, with fixed edge lengths, one can replace the integral by a discrete sum, vastly simplifying the problem \cite{d1992}. Sampling large random piecewise linear manifolds is an important aspect of this theory \cite{ab2014}. It is conjectured that the associated metric space converges to the Brownian map. See \cite{lm2012} for a definition of the Brownian map and a survey of results about large random planar maps. Surface graphs which support a family of different geometric realizations can be used as a model for random surfaces. Our results imply that certain surface graphs do not have a non-trivial space of geometric realizations.

Although much is known about convex polyhedra in \(\R^n\) (see \cite{a2005} or \cite{g2003c}) and convex ideal polyhedra in \(\mathbb H^3\) (see \cite{r1996}), their techniques do not apply to the inherently nonconvex surfaces we study in this paper.

\nocite{gm1963,g1937,k1996n}

In order to prove the three main theorems we use inductive arguments that rely on a procedure for simplifying RPSs by removing certain subgraphs. Just as we can build more complicated RPSs by gluing two RPSs together, there is an inverse process where we can simplify a RPS by removing certain subgraphs and replacing them by others. We call the process \emph{polyhedral surgery} and it is defined as follows. Let \(P\) be a RPS with data \((\Sigma_1, \mathcal G_1, \psi_1)\) and \(Q\) a RPS with data \((\Sigma_2, \mathcal G_2, \psi_2)\). 
Suppose both RPSs contain cycles of length \(n\), call them \(C_1 \) and \(C_2\) respectively. Label the vertices in \(P\) along \(C_1\) by \(v_1,\ldots,v_n\) and the vertices in \(C_2\) by \(w_1,\ldots ,w_n\). In addition, suppose there exists an isometry \(G\) of \(\R^3\) such that \(G\circ \psi_1(v_i)= \psi_2(w_i)\). Cutting \(\Sigma_1\) along the cycle disconnects it into two hemispheres, \(H_P^1\) and \(H_P^2\) and likewise for \(\Sigma_2\) with hemispheres \(H_Q^1\) and \(H_Q^2\). Now we can glue \(H_P^1\) to \(H_Q^1\) along their respective boundaries to form a new RPS with geometric realization defined by 
\[ \psi(x) = \begin{cases} G \circ \psi_1(x)\qquad & \text{ if } x \in H_P^1 \\ \psi_2(x) \qquad & \text{ if } x \in H_Q^1 \end{cases}.\]
It is possible that there is a face \(f\) in \(H_P^1\) and a face \(g\) in \(H_Q^1\) that are adjacent in the surface graph of the new RPS and \(\psi(f)\) and \(\psi(g)\) intersect in more than just one edge. When two faces intersect in this manner we call them \emph{dangling faces}. However, a slight modification of \(C_1\) to include \(f\) prevents this from occurring.

Polyhedral surgery can always be performed when the geometric realization of the RPS formed by gluing the hemispheres \(H_P^2\) and \(H_Q^1\) forms a convex polyhedron. As an example we consider a case when the two hemispheres form a cube. Let \(P\) be a RPS containing the subgraph as shown on the left-hand side of figure \ref{sqsurg} and \(Q\) a RPS containing the subgraph shown on the right-hand side of the same figure. We can cut \(P\) along the cycle in green, remove the hemisphere containing \(v_1\) and glue in the hemisphere of \(Q\) which contains the subgraph shown in the right-hand side of figure \ref{sqsurg}. The resulting RPS has the same genus as \(P\) but has two fewer faces. 

\begin{figure}
  \begin{center}
    \includegraphics[width=\textwidth]{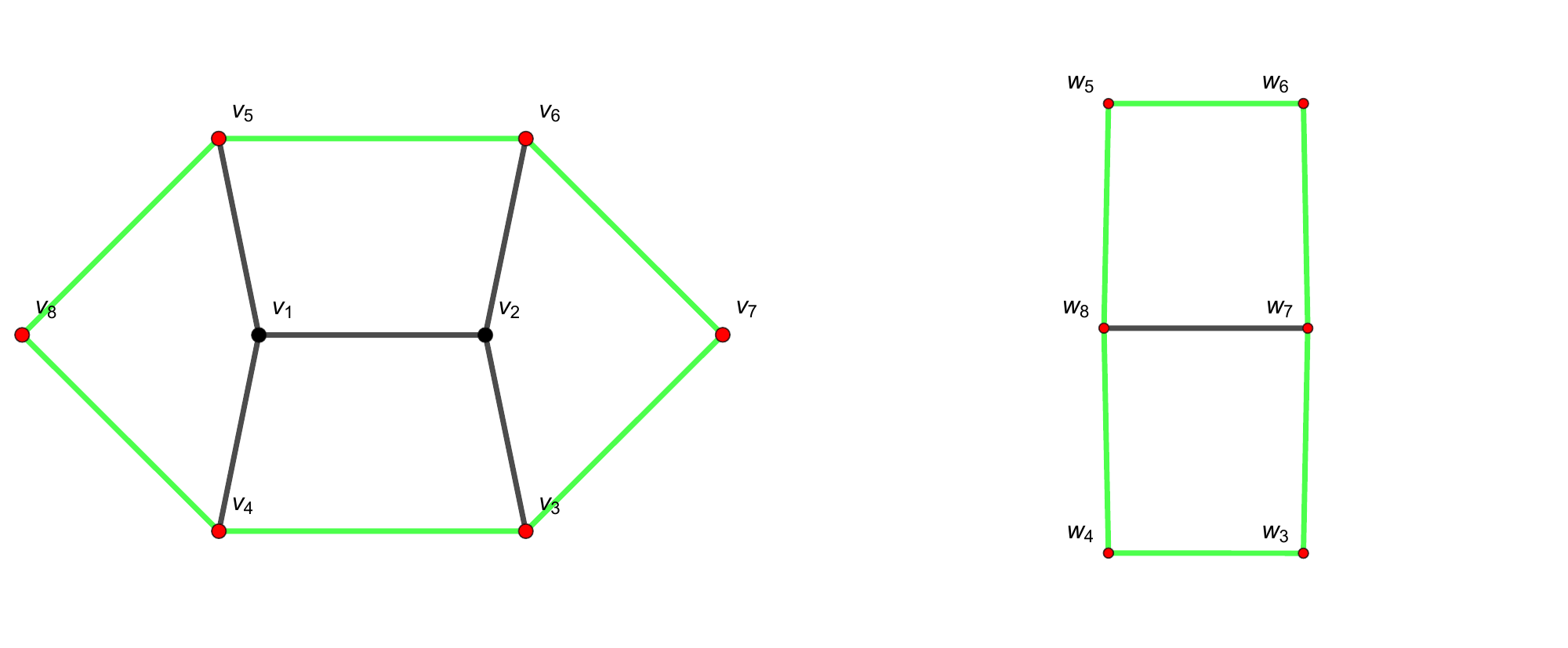}
\caption{Surgery on a cube}
\label{sqsurg}
  \end{center}
\end{figure}


\section{\((5)\)-RPSs}\label{sec:rps5}

When the degree of each face in a RPS is large and the genus is low, geometric constraints impose some rigidity on the structure of the geometric realization of the surface. In this section we use a discrete form of the Gauss-Bonnet theorem to prove theorem \ref{pent}. 
\begin{thm}[Discrete Gauss-Bonnet Theorem]
For a RPS \(P\) with Euler characteristic \(\chi\) and curvature \(k_v\) at each vertex \(v\) of \(P\) we have
\begin{align*} \sum_{v\in P}^n k_v  = 2 \pi \chi \end{align*}
where the \emph{vertex curvature} \(k_v\) is \(2\pi\) minus the sum over all faces containing \(v\) of the interior angle at \(v\) in the geometric realization of the face.
\end{thm}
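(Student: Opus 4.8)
The plan is to reduce the identity to Euler's formula \(\chi = V - E + F\) by a double-counting argument on interior angles. First I would expand the definition of the vertex curvature and split the sum over the vertices into two pieces. Writing \(V\) for the number of vertices and \(\theta_{f,v}\) for the interior angle at \(v\) of the face \(f\) in its geometric realization, we have
\[ \sum_{v\in P} k_v = \sum_{v} \Bigl( 2\pi - \sum_{f\ni v} \theta_{f,v} \Bigr) = 2\pi V - \sum_{v} \sum_{f\ni v} \theta_{f,v}. \]
So the whole theorem comes down to evaluating the double sum of all interior angles.

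The key observation is that \(\sum_{v} \sum_{f\ni v} \theta_{f,v}\) is simply the total of every interior angle of every face, each angle counted exactly once at the vertex where it sits. Reorganizing this sum by faces instead of by vertices gives \(\sum_{f} \bigl( \sum_{v\in f} \theta_{f,v} \bigr)\), the sum over faces of the angle sum of each individual face. Because \(\psi\) sends a face of degree \(k\) to a regular, hence planar and convex, Euclidean \(k\)-gon, its interior angles sum to \((k-2)\pi\) regardless of the particular realization. Letting \(k_f\) denote the degree of the face \(f\), the total angle sum is therefore
\[ \sum_{f} (k_f - 2)\pi = \pi \sum_{f} k_f - 2\pi F. \]

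Next I would invoke the handshake identity \(\sum_f k_f = 2E\): since \((\Sigma,\Gamma)\) is a closed, regular, proper surface graph, every edge borders exactly two faces, so summing the face degrees counts each edge twice. Substituting yields a total angle sum of \(2\pi E - 2\pi F\), and combining this with the first display gives
\[ \sum_{v\in P} k_v = 2\pi V - (2\pi E - 2\pi F) = 2\pi (V - E + F) = 2\pi\chi, \]
as desired.

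I do not expect a genuine obstacle here, since the argument is essentially a repackaging of Euler's formula; the only points requiring care are the two combinatorial identities. The first is that interior angles biject between the vertex-indexed and face-indexed organizations of the sum, and the second is that each edge is incident to exactly two faces, which is precisely where the hypothesis that \(\Sigma\) is a closed surface with a regular, proper graph enters. It is worth emphasizing that every quantity in the proof is intrinsic and combinatorial: the angle sums depend only on the face degrees through the regular-polygon formula, so the fact that \(\psi\) need not be an embedding and that the realization may self-intersect is immaterial. An alternative route would be to regard the surface as a flat cone-metric with the curvature concentrated at the vertices as angle deficits and apply the smooth Gauss--Bonnet theorem, but the counting argument above is more elementary and self-contained.
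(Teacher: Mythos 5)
Your proof is correct. Each step checks out: the reorganization of the curvature sum into \(2\pi V\) minus the total of all interior angles, the double count that converts the vertex-indexed angle sum into a face-indexed one, the angle-sum formula \((k-2)\pi\) for the planar regular \(k\)-gon image of each face, the handshake identity \(\sum_f k_f = 2E\) (valid here because the surface graph is closed, regular, and proper, so each edge borders exactly two faces), and finally Euler's formula \(V-E+F=\chi\), which applies because all complementary components of \(\Gamma\) are open \(2\)-cells. The paper itself does not write out a proof; it cites the literature and sketches one route: triangulate the RPS, note that each triangle contributes \(\pi\) to the angle count, and apply Euler's formula to the triangulation. Your argument follows the same underlying strategy --- trade the curvature sum for an angle count and invoke Euler's formula --- but it avoids the triangulation entirely by working directly with the polygonal faces. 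That is a modest but genuine simplification: the triangulation route obliges one to verify that subdividing faces changes neither the Euler characteristic nor the total curvature (the new vertices must be flat), whereas your version rests only on the two combinatorial identities you isolate. Your closing remark is also well taken: since only the planarity of each face's image enters through the angle-sum formula, the possible failure of \(\psi\) to be injective is irrelevant, which is exactly why the theorem extends painlessly to the non-embedded RPSs considered in the paper.
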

A proof of this theorem can be found in many places including \cite{s2011}. While the Gauss-Bonnet theorem is not normally proved for the RPSs we study in this paper, the extension to our case is straightforward. One way to prove the theorem is by triangulating the RPS, counting the contribution to the curvature from each face \(\pi\), and applying Euler's formula for a graph on a surface with Euler characteristic \(\chi\).

We will find it more useful to assign curvature to faces instead of vertices. The \emph{facial curvature} \(k_f\) associated to face \(f\) is given by
\[ k_f=\sum_{v\in f}^n \frac{  k_{v}}{d_{v}} \]
where the sum is over all vertices incident to \(f\) and \(d_v\) is the degree of vertex \(v\).

For the remainder of this section we restrict our attention to pentagonal RPSs, those with all faces of degree five. Before proving theorem \ref{pent} we first prove a useful lemma. 
\begin{lem}\label{pos-curv face}
Let \(f\) be a face of a RPS with all faces of degree five. If \(f\) has positive facial curvature then each vertex is of degree three. Moreover, if \(f\) has one negative curvature vertex then \(f\) has non-positive facial curvature. 
\end{lem}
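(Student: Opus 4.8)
The plan is to separate the arithmetic of the facial curvature from the geometry that controls which vertex-degree patterns can actually occur. Since every face is a regular pentagon, each interior angle equals \(3\pi/5\), so a vertex \(v\) of degree \(d\) has curvature \(k_v = 2\pi - d\cdot 3\pi/5\) and contributes \(g(d) := k_v/d = 2\pi/d - 3\pi/5\) to \(k_f\) whenever it lies on \(f\). The function \(g\) is strictly decreasing, with \(g(3) = \pi/15 > 0\) and \(g(d) \le g(4) = -\pi/10 < 0\) for \(d \ge 4\); properness of the surface graph forces every vertex degree to be at least \(3\). Writing \(d_1,\dots,d_5\) for the degrees of the vertices of \(f\), we have \(k_f = \sum_{i=1}^5 g(d_i)\).

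First I would record the arithmetic bound. Let \(j\) be the number of vertices of \(f\) of degree at least \(4\). Using \(g(d_i) = g(3)\) for the degree-three vertices and \(g(d_i) \le g(4)\) for the remaining \(j\) vertices,
\[ k_f \;\le\; (5-j)\,g(3) + j\,g(4) \;=\; \frac{\pi}{6}\,(2-j). \]
Hence \(k_f > 0\) forces \(j \le 1\), while \(j \ge 2\) already yields \(k_f \le 0\). Both parts of the lemma therefore reduce to a single geometric claim: a pentagon of a \((5)\)-RPS cannot have exactly one vertex of degree \(\ge 4\) (the case \(j = 1\)).

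The heart of the argument is a local rigidity statement: three unit regular pentagons sharing a vertex close up into a configuration congruent to a corner of the regular dodecahedron, so at any degree-three vertex every dihedral angle equals the dodecahedral value \(\delta = \arccos(-1/\sqrt5)\). Granting this, suppose \(f = v_1 v_2 v_3 v_4 v_5\) has \(v_2,v_3,v_4,v_5\) of degree three, and let \(f_i\) be the face adjacent to \(f\) across the edge \(v_i v_{i+1}\) (indices mod \(5\)). The dodecahedral corner at \(v_2\) pins the dihedral angle between \(f\) and \(f_1\) along \(v_1 v_2\) to \(\delta\), and the corner at \(v_5\) pins the dihedral angle between \(f\) and \(f_5\) along \(v_5 v_1\) to \(\delta\). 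But two regular pentagons attached to \(f\) at dihedral angle \(\delta\) along the two edges meeting at \(v_1\) are exactly the two remaining faces of a dodecahedral corner at \(v_1\): their placement is forced, and they meet along a common edge issuing from \(v_1\), their angles already accounting for the full \(3\cdot 108^\circ\) of a closed corner. There is then no room for a fourth face at \(v_1\), so \(v_1\) has degree three and \(j = 1\) is impossible.

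Combining the two ingredients finishes the proof. For the first assertion, \(k_f > 0\) gives \(j \le 1\), and the geometric claim upgrades this to \(j = 0\), i.e. every vertex of \(f\) has degree three. For the second, a negative-curvature vertex means \(j \ge 1\); since \(j = 1\) cannot occur we have \(j \ge 2\), and the displayed bound gives \(k_f \le \frac{\pi}{6}(2-j) \le 0\). The main obstacle is the rigidity input used in the third paragraph: one must verify that a cyclic arrangement of three hinged regular pentagons is rigid (a spherical triangle linkage) and realizes precisely the dodecahedral corner, and that the non-degeneracy hypothesis on \(\psi\) forbids the forced faces \(f_1, f_5\) from overlapping rather than meeting cleanly along an edge. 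Everything else is the elementary curvature bookkeeping above.
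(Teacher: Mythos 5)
Your proof is correct and follows essentially the same route as the paper: a monotonicity bound on the facial curvature reduces both assertions to excluding a face with exactly one vertex of degree at least four, and that case is eliminated by the same rigidity argument the paper uses --- the degree-three vertices pin the dihedral angles to the dodecahedral value, so the two faces meeting \(f\) at the exceptional vertex are forced into a closed dodecahedral corner, leaving that vertex with degree three. The only cosmetic difference is that you package the paper's finite case-check as the single estimate \(k_f \le \tfrac{\pi}{6}(2-j)\), and the caveats you flag at the end (rigidity of three hinged pentagons, and non-degeneracy ruling out overlapping faces) are precisely the points the paper itself treats tersely.
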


\begin{proof}
Since \(f\) has positive facial curvature there must be a vertex incident to \(f\) with positive vertex curvature. The curvature at a vertex of degree \(d\) is \(\pi/5 (10-3d)\) which is only positive if the vertex has degree three. This vertex contributes \(\pi/5 (10/d -3)\) to the facial curvature of each face containing it. If vertices with degrees \(d_1, \ldots d_5\) are incident to a face then its facial curvature is 
\[\frac{\pi}{5} \sum_{i=1}^5 \left( \frac{10}{d_i}- 3 \right)=-3 \pi + \pi \sum_{i=1}^5 \frac{2}{d_i}.\]
This function is monotonically decreasing as a function of the degrees. If one vertex has degree \(6\) then it is non-positive. Checking the finitely many cases in which each vertex has degree less than or equal to \(6\) we find that the facial curvature is only positive when the face has at least \(4\) degree three vertices and a fifth vertex with degree at most \(5\).

Next assume that the face \(f\) has four vertices of degree three and a fifth vertex \(v\) with degree either four or five. Let \(g\) and \(h\) be the two faces incident to \(v\) which share an edge with \(f\). Since \(g\) shares an edge with \(f\) it must also share two vertices with \(f\) and since \(f\) only has one vertex with degree more than three, \(g\) and \(f\) must also share a degree three vertex. Thus the dihedral angle between \(g\) and \(f\) in the geometric realization is fixed to be that of the dodecahedron, and likewise for the dihedral angle between \(f\) and \(h\) in the geometric realization. This implies that the geometric realizations of \(g\) and \(h\) intersect along an edge. Therefore three is the maximum degree of \(v\). From this proof we find that when \(f\) has one negative curvature vertex it must have a second and any face with at least two negative curvature vertices has facial curvature of at most zero. 
\end{proof}


\begin{proof}[Proof of theorem \ref{pent}]
First we prove theorem \ref{pent} for genus zero RPSs. Suppose that the set of counterexamples to the theorem is nonempty. Our RPSs are assumed to be finite thus there exists a lower bound on the number of faces in a surface which is an element of the set of counterexamples. Let \(n\) be this lower bound and \(P\) a member of the set of counterexamples with \(n\) faces. Let \((\Sigma, \Gamma, \psi)\) denote the data of \(P\). We use polyhedral removal surgery to construct a RPS with fewer than \(n\) faces whose realization is not a union of dodecahedra, therefore contradicting the assumption of minimality.

By assumption, \(P\) has genus zero and total curvature \(4\pi\) thus contains a face with positive facial curvature. Lemma \ref{pos-curv face} states that the degree of every vertex incident to a face with positive facial curvature is three. Let \(f\) be a face with positive facial curvature. To simplify notation, we label faces that share an edge with \(f\) as \emph{first-generation} faces and faces that share an edge with first-generation faces as \emph{second-generation} faces. If a first-generation face had positive curvature, then it would have to share a degree three vertex with some second-generation face. Let \(C_1\) be the cycle in \(\Gamma\) bounding the seven faces consisting of \(f\), the first-generation faces and the face in the second-generation which shares a degree three vertex with a first generation face. Cut the surface along \(C\) into two hemispheres \(H_P^1\) and \(H_P^2\), where \(H_P^1\) is the hemisphere with seven faces. Since all faces in \(H_P^1\) are connected by the same type of degree three vertices it can be realized as a hemisphere of a dodecahedron. 

Let \(Q\) be a genus zero RPS with twelve faces. As can be seen from counting the curvature at every vertex, its geometric realization is a dodecahedron. Cut it along a curve \(C_2\) into two hemispheres such that one hemisphere has five faces and the other has seven. Label the hemisphere with five faces \(H_Q^1\) and the other \(H_Q^2\). Using polyhedral removal surgery we can glue \(H_Q^1\) and \(H_P^2\) along their boundaries to form a genus zero RPS \(P'\) with \(n-2\) faces. Since \(P\) cannot be realized as a union of dodecahedra and the faces we removed can be realized as a part of a dodecahedra, the new surface \(P'\) cannot be realized as a union of dodecahedra. However, \(P'\) has \(n-2\) faces which contradicts the assumption that \(n\) was the lower bound on the number of faces in a RPS which cannot be realized as a union of dodecahedra. It is possible that after gluing the hemispheres together, two adjacent faces have the same geometric realization. However, we can resolve this issue by changing \(C_1\) so that one of these faces is in \(H_P^1\). Likewise we modify \(C_2\) so that the boundaries of \(H_Q^1\) and \(H_P^2\) agree. 

Now we argue by contradiction to establish that the surface must contain a face with positive facial curvature with an adjacent face that also has positive facial curvature. Assume that no face in the first-generation has positive facial curvature. No face in the second-generation can have positive facial curvature either. Otherwise it would have all degree three vertices thus share a degree three vertex with a first-generation face and we could apply the same argument as in the preceding paragraph to construct a counterexample to the theorem with \(n-2\) faces. Since each first generation face has two degree three vertices and the facial curvature is a monotonic function of the degrees, the facial curvature can be maximized by maximizing the number of degree three vertices. Each face has negative curvature so there are at most three degree three vertices. The only geometrically realizable configuration with three degree three vertices, subject to the constraint that the face has negative facial curvature, is the configuration with three vertices of degree three that are in different orientations. However in this case the remaining two vertices must have degree at least five giving this configuration curvature \(-\pi/5\), otherwise the surface would not have a valid geometric realization. If the face has two degree three vertices then its facial curvature is maximized with three degree four vertices and this configuration has facial curvature \(-\pi/6\). Since facial curvature is a monotonic function, any configuration with less than two degree three vertices will have less curvature than the configuration with two degree three vertices and three degree four vertices. 

The central face \(f\) has facial curvature \(\pi/3\) which gives the region including \(f\) and the first generation faces, total facial curvature \(-2\pi/3\). Every positive curvature face must be contained in a region with total facial curvature at most \(-2\pi/3\), and these regions must be disjoint because second generation faces also have negative facial curvature. Thus, \(-2\pi s/3\) is an upper bound on the total curvature of the surface where \(s\) is the number of positive curvature faces. This contradicts our assumption that the surface has genus zero and positive total curvature. Therefore the RPS must have at least one face with positive facial curvature which is adjacent to a face with positive facial curvature. Using polyhedral removal surgery we can always build a counterexample to the theorem with fewer than \(n\) faces.

Finally, we extend the result to genus one RPSs. Arguing in the same manner as the genus zero case, suppose the set of genus one counterexamples to theorem \ref{pent} is non-empty and let \(n\) be a lower bound on the number of faces of a RPS in this set. Let \(P\) be an element of the set of counterexamples with \(n\) faces. Since the total curvature of \(P\) is zero, we divide the proof into two cases. In the first case, \(P\) has at least one face with positive facial curvature. The same argument in the preceding paragraph shows that \(P\) must have a region of seven contiguous faces on which we can use polyhedral surgery to build a counterexample with \(n-2\) faces, thus contradicting the assumption that \(n\) is a lower bound on the number of faces in a counterexample. In the second case, every face of \(P\) has zero facial curvature. Recall that the curvature of a face with vertices of degrees \(d_1,\ldots, d_5\) is 
\[-3 \pi + \pi \sum_{i=1}^5 \frac{2}{d_i}.\]
This sum is negative when at least one of the vertices has degree greater than six. Checking the (finitely many) cases with each \(d_i\leq 6\) we find that the sum is zero either when the face has four degree three vertices and one degree six vertex or when the face has three degree three vertices and two degree four vertices.  A RPS cannot have a face with four vertices of degree three and one vertex of degree six. Two adjacent faces incident to such a vertex would have 2-dimensional intersection in the surface's realization thus violating one of the conditions in the definition of a RPS. For the remainder of the proof we assume that every face has three degree three vertices and two degree four vertices.

In each face the two degree four vertices must be adjacent in order for the surface to have a geometric realization. This condition severely restricts the combinatorics of the underlying surface graph. In figure \ref{gzcase} we show a subgraph of a surface graph for which the three faces \(f_1, f_2\) and \(f_3\) satisfy this requirement on the degrees. Notice that there are three degree three vertices in the interior of the cycle shown in green, thus we can apply polyhedral removal surgery on the green cycle to reduce the number of faces in the surface. The hemisphere we glue in has five faces and comes from a hemisphere of a dodecahedron.

\begin{figure}
  \begin{center}
    \includegraphics[scale=0.4]{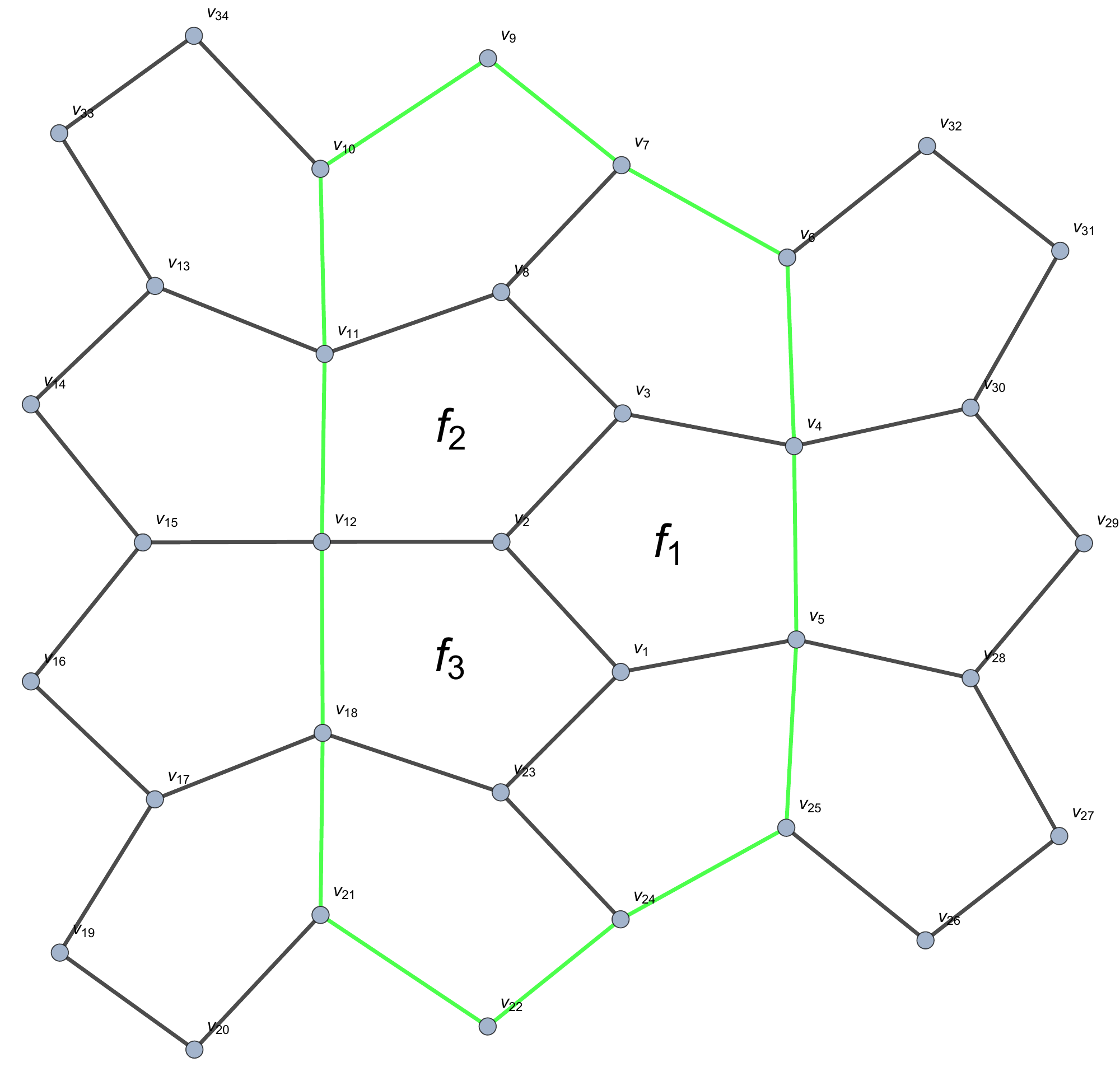}
\caption{Subgraph of the surface graph of a RPS in which every face has zero facial curvature}
\label{gzcase}
  \end{center}
\end{figure}

\end{proof}

\section{\((5,7,8,9,10)\)-RPSs}

In this section we restrict our attention to \((5,7,8,9,10)\)-RPSs and prove theorem \ref{pent-n}. Our assumption that a RPS has a geometric realization in \(\R^3\) places a restriction on the types of degree three vertices that may be present. A degree three vertex is only geometrically realizable when it has non-negative curvature. The curvature of a vertex at which two degree five faces and one degree \(n\) face meet is \((n-10)\pi/5n\) so the configuration is only realizable if \(n \leq 10\), justifying our restriction on the maximum degree of a face. Moreover, if two degree seven (or higher) faces meet at a vertex then the curvature is negative. Thus every degree three vertex is formed by the intersection of at least two degree five faces and one other face which may degree larger than five.

We exclude degree six faces because the existence of large combinatorial spheres with regular pentagonal and hexagonal faces, such as the truncated icosahedron, present an obstacle to our methods. Our methods are local arguments and these large combinatorial spheres imply that we must examine neighborhoods with many faces. Furthermore, vertices at which three degree six faces meet have zero vertex curvature which implies that the surface may have large regions of faces with zero facial curvature in between positive curvature faces. Nevertheless we conjecture that any genus zero RPS with faces of degree five or higher can be realized as a union of dodecahedra and truncated icosahedra glued together along common facets.

Before proving theorem \ref{pent-n} we introduce notation for the different types of vertices a face may have. A vertex at which \(k\) faces of degree \(m\) and \(l\) faces of degree \(n\) meet is denoted by \(m^k,n^l\). For a face with vertices \(v_1, \ldots v_n\) we use the product notation \(({m_1}^{k_1},{n_1}^{l_1}) \cdots ({m_t}^{k_t},{n_t}^{l_t}) \) to indicate that vertex \(v_i\) is of the form \(({m_i}^{k_i},{n_i}^{l_i})\). We always assume the vertices are ordered cyclically. 

In order for a RPS to have a geometric realization only certain vertex combinations on a face are allowed. For example, a vertex of the form \((5^2,7)\) cannot be adjacent to a vertex of the form \((5^3)\). At a degree three vertex the degree of the faces incident to the vertex determine the dihedral angles between the images of the faces under the geometric realization. In table \ref{table:angles} we record the dihedral angles between faces incident to vertices with non-negative vertex curvature. Since the dihedral angles are different for different type vertices, we find that two degree three vertices can only be adjacent if both vertices have the same type. 

\begin{center}
	\begin{table}
    \begin{tabular}{| l | l | l | }
    \hline
    Vertex type & \(5-5\) Dihedral angle &  \(5-n\) Dihedral angle.  \\ \hline
    \((5^3)\) & \(116.57^{\circ}\)  & \(116.57^{\circ}\) \\
    \((5^2,7)\) & 142.65 & 132.43   \\
    \((5^2,8)\) &  152.54 & 141.67   \\
     \((5^2,9)\) & 162.27 & 153.22   \\
     \((5^2,10)\) & 180 & 180   \\
    \hline
    \end{tabular}
    \caption{Dihedral angles between faces incident to a \((5^2,n)\) vertex} \label{table:angles}
    \end{table}
\end{center}

Suppose a face \(f\) is incident to two degree three vertices \(v\) and \(w\), both of which are adjacent to a third vertex \(u\). The dihedral angles between any two of the three faces incident to a degree three vertex are determined by the vertex type (see table \ref{table:angles}). Let \(g\) be the face incident to both \(u\) and \(v\) (which isn't \(f\)) and \(h\) the face incident to both \(u\) and \(w\) (which isn't \(f\)). If \(v\) and \(w\) do not have the same vertex type then the dihedral angle between \(g\) and \(h\) will not be one of those listed in table \ref{table:angles}, implying that the degree of \(u\) is at least \(4\). However, the vertex cannot have degree \(4\) because a regular polygon cannot be adjacent to both \(g\) and \(h\). Therefore the vertex must have degree at least \(5\).

Now we prove two lemmas which we use to prove theorem \ref{pent-n}.

\begin{lem}\label{verts n face}
Let \(f\) be a face of a RPS of degree \(n\) with \(n\geq 7\). If \(f\) has positive facial curvature then every vertex incident to \(f\) has the form \((5^2,n)\). Moreover, if \(f\) has one vertex with negative vertex curvature then \(f\) has negative facial curvature.
\end{lem}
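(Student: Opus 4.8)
The plan is to follow the strategy of Lemma \ref{pos-curv face}, pairing a curvature computation with a rigidity argument based on the fixed dihedral angles recorded in Table \ref{table:angles}. Write \(\theta_k=(k-2)\pi/k\) for the interior angle of a regular \(k\)-gon, so that a vertex curvature is \(2\pi\) minus the sum of the incident interior angles. I would begin by classifying the vertices incident to \(f\). Because \(f\) has degree \(n\ge 7\) and, as already observed, every degree-three vertex is the meeting of at least two pentagons and one further face, any degree-three vertex of \(f\) must be of type \((5^2,n)\); its curvature is \(2\pi-2\theta_5-\theta_n=\pi(10-n)/(5n)\), positive for \(n\le 9\) and zero for \(n=10\). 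On the other hand a vertex of \(f\) of degree \(d\ge 4\) always has strictly negative curvature: its curvature is largest when the \(d-1\) faces other than \(f\) are all pentagons, and already for \(d=4\) this largest value \(2\pi-\theta_n-3\theta_5=\pi(2/n-4/5)\) is negative, with larger \(d\) only decreasing it. Hence a vertex of \(f\) has nonnegative curvature precisely when it is a \((5^2,n)\) vertex.

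The key step is the rigidity claim that no degree-\(\ge 4\) vertex \(v\) of \(f\) can have both of its neighbors along \(f\) of type \((5^2,n)\). I would argue as in Lemma \ref{pos-curv face}: if the two neighbors of \(v\) are \((5^2,n)\) vertices, each pins the dihedral angle between \(f\) and the pentagon across the corresponding edge of \(f\) at \(v\) to the tabulated \(5\)-\(n\) value. With \(f\) fixed this determines the planes of both pentagons at \(v\), and the regular-pentagon angle then determines their remaining edges at \(v\), which must coincide; the two pentagons therefore meet along an edge and rebuild exactly the rigid \((5^2,n)\) corner, leaving no room for a fourth face and contradicting \(d\ge 4\). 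Consequently the negative-curvature vertices of \(f\) occur in runs of length at least two; in particular, if \(f\) has one negative-curvature vertex it has at least two.

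Granting this, the ``moreover'' clause becomes a counting estimate for \(k_f=\sum_{v\in f}k_v/d_v\). Every \((5^2,n)\) vertex contributes \(\pi(10-n)/(15n)\), while by the monotonicity above every negative vertex contributes at most the degree-four maximum \(\tfrac14(2\pi-\theta_n-3\theta_5)\). If \(f\) has a negative vertex it has at least two, hence at most \(n-2\) positive ones, and
\[ k_f\le (n-2)\,\frac{\pi(10-n)}{15n}+\frac{1}{2}\bigl(2\pi-\theta_n-3\theta_5\bigr). \]
Evaluating the right-hand side for each \(n\in\{7,8,9,10\}\) yields a strictly negative number, so \(f\) has negative facial curvature. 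The first assertion now follows by contraposition: any vertex of \(f\) that is not \((5^2,n)\) has negative curvature and hence forces \(k_f<0\), so positive facial curvature forbids such a vertex.

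I expect the rigidity claim to be the main obstacle, precisely because it is what rules out an isolated negative vertex. A naive count permitting a single degree-four vertex among \(n-1\) copies of \((5^2,n)\) does not give \(k_f<0\); only the forced second negative vertex, produced by the pinned dihedral angles, closes the estimate. The rest---verifying the displayed inequality in the four cases and confirming that degree-four vertices give the least negative contribution---is routine.
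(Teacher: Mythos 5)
Your proposal follows the paper's overall strategy (curvature bookkeeping plus dihedral-angle rigidity), but there is a genuine gap at exactly the step you flag as the crux: the claim that no vertex \(v\) of \(f\) with degree \(d\ge 4\) can have both neighbors along \(f\) of type \((5^2,n)\). Your justification---the two pentagons \(g\) and \(h\) across the edges of \(f\) at \(v\) have coincident free edges, ``leaving no room for a fourth face''---only yields a contradiction for \(d=4\): there the single intermediate face would have two distinct edges at \(v\) with the same realization, impossible for a regular polygon. For \(d=5\) a different argument is needed (the two intermediate faces would be regular polygons sharing the same two adjacent edges at \(\psi(v)\), hence coplanar and overlapping, violating the intersection condition in definition \ref{def:rps}), and for \(d\ge 6\) there is no local contradiction at all: realizations are not assumed to be embeddings, so three or more faces can fold back through the coincident edge and close up the link of \(v\). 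Thus ``negative-curvature vertices occur in runs of length at least two'' is precisely what you cannot conclude; an isolated vertex of degree \(\ge 6\) survives your rigidity argument, and your final estimate---which budgets two negative vertices, each at the degree-four bound \(\tfrac{\pi}{4}\left(\tfrac{2}{n}-\tfrac{4}{5}\right)\)---never addresses that case.

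The paper closes this case with curvature rather than rigidity: it uses the pinned-corner argument only to show the exceptional vertex has degree at least \(6\) (ruling out \(4\) and \(5\) as above), and then notes that a degree-\(6\) vertex has curvature at most \(2\pi\left(\tfrac{1}{n}-1\right)\) (configuration \((5^5,n)\)), so a face with \(n-1\) vertices of type \((5^2,n)\) and one such vertex has facial curvature at most \(-(n-1)(n-5)\pi/(15n)<0\). (It also disposes of the case of two or more vertices of degree greater than \(3\) by a direct computation, which is the analogue of your paired estimate.) Your proof becomes correct if you replace ``contradicting \(d\ge 4\)'' by ``forcing \(d\ge 6\)'' and add this one extra bound for an isolated high-degree vertex; the remaining ingredients of your writeup---the contribution \(\pi(10-n)/(15n)\) of a \((5^2,n)\) vertex, the monotonicity of the bound in the degree, and the four numerical evaluations---are correct.
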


\begin{proof}
The only positive curvature vertices are those of the form \((5^2,n)\) with vertex curvature \(\pi \left(2/n  - 1/5 \right)\). The curvature of a degree four vertex incident to \(f\) is at most \(\pi \left(2/n  - 1/5 \right)\) where the vertex has configuration \((5^3,n)\). If a face has \(k\) vertices of degree greater than \(3\) and \(n-k\) vertices of degree \(3\) then its facial curvature is \( -\pi (2n^2+4kn -20n + 5k)/30n \) which is negative when \(k >1 \) for \(n\geq 7 \). 

Assume that \(f\) has \(n-1\) vertices of type \((5^2,n)\) and one vertex \(v\) which may have degree greater than 3. Let \(g\) and \(h\) be the two faces adjacent to \(f\) which are also incident to \(v\). Under the geometric realization of the surface, \(g\) and \(h\) intersect along an edge because the degree \(3\) vertices determine the dihedral angle between these two faces. If \(v\) had degree \(4\) then a face would be adjacent to both \(g\) and \(h\) but then the realization of this face could not be a regular polygon because two of its edges have the same geometric realization. This implies that \(v\) has degree at least five. However, if \(v\) had degree \(5\) then the geometric realization of two of the faces incident to \(v\) would overlap which is a contradiction. We find that \(v\) has degree at least \(6\). 

A vertex of degree \(6\) has the largest vertex curvature in configuration \((5^5-n)\) with vertex curvature \(2\pi (-1+1/n)\). If a face has \(n-1\) vertices of type \((5^2-n)\) and one vertex of degree \(6\) then its facial curvature is at most \(-(n-1)(n-5)\pi/15 n\) which is negative for \( n>5\). Therefore when \(f\) has positive facial curvature every vertex must be of type \((5^2,n)\). Furthermore, this argument implies that a face with one negative curvature vertex must have a second vertex with negative vertex curvature and thus have negative facial curvature.
\end{proof}

\begin{lem}\label{pent-face-pos}
In a \((5,7,8,9,10)\)-RPS of genus zero, there exists a face of degree five with positive facial curvature.
\end{lem}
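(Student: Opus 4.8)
The plan is to combine the discrete Gauss--Bonnet theorem with Lemma \ref{verts n face}. Since the surface has genus zero its Euler characteristic is $2$, so the total vertex curvature is $4\pi$. Because each vertex $v$ distributes its curvature evenly among its $d_v$ incident faces, we have $\sum_f k_f=\sum_f\sum_{v\in f}k_v/d_v=\sum_v k_v$, so the total \emph{facial} curvature is also $4\pi>0$. Hence at least one face $f$ has positive facial curvature. If $\deg f=5$ we are done, so the whole difficulty is the case $\deg f=n\geq 7$.

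In that case Lemma \ref{verts n face} forces every vertex of $f$ to have type $(5^2,n)$. A $(5^2,10)$ vertex has zero curvature, so a decagon with only such vertices would satisfy $k_f=0$; thus in fact $n\in\{7,8,9\}$ and every vertex of $f$ is a degree-three vertex with positive curvature $\pi(2/n-1/5)$. In particular the two faces flanking $f$ at each of its vertices are pentagons, so $f$ is encircled by a ring of $n$ genuine degree-five faces $P_1,\dots,P_n$, where $P_i$ shares the $i$-th edge of $f$ and consecutive pentagons $P_{i-1},P_i$ share the third edge at the common $(5^2,n)$ vertex $v_i$.

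I would then try to force one of these pentagons to have positive facial curvature. Each $P_i$ carries the two $(5^2,n)$ vertices $v_i,v_{i+1}$ of $f$ together with two outer vertices shared with its neighbours and one outer ``tip'' vertex. Summing facial curvatures around the ring gives $\sum_i k_{P_i}=2k_f+(\text{outer-vertex terms})$, in which the term $2k_f$ is strictly positive. The content is that the outer vertices cannot be negative enough to absorb this positive term. Here Table \ref{table:angles} is essential: the dihedral angle between $f$ and each $P_i$ and between consecutive $P_i$ are completely determined, so the pentagon skirt around $f$ is rigid, its outer rim is a fixed $2n$-cycle, and the complementary region is a disk on which Gauss--Bonnet can be applied again. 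I would exploit this rigidity, together with the realizability constraints on degree-three vertices recorded in Table \ref{table:angles}, to bound the degrees of the outer vertices and conclude that some pentagon adjacent to $f$, or some pentagon in the disk capping the rim, has positive facial curvature.

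The main obstacle is exactly this last step. A purely global averaging argument is \emph{insufficient}: since a $(5^2,n)$ vertex sends only one third of its positive curvature to $f$ and two thirds to its two pentagons, one only obtains the bound $\sum_{\text{pentagons}}k_P\geq 4\pi-\tfrac13\sum_{(5^2,n)}k_v$, and the right-hand side need not be positive when many high-degree (negative) vertices are present. Consequently the decisive argument must be local and geometric, using the fixed dihedral angles to rule out an entirely negative outer rim; controlling the capping disk of the rigid bowl around $f$ is where the real work lies.
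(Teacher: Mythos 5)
Your setup coincides with the paper's: Gauss--Bonnet produces a positive-curvature face, Lemma \ref{verts n face} forces all of its vertices to have type \((5^2,n)\), and hence a ring of \(n\) pentagons surrounds it. But the decisive step --- which you explicitly leave open (``controlling the capping disk of the rigid bowl around \(f\) is where the real work lies'') --- is the entire content of the lemma, so the proposal has a genuine gap. Moreover, the statement you aim for is stronger than what is needed and is not what the paper proves: the paper never shows that some pentagon near \(f\) has positive facial curvature. Instead it argues by contradiction globally. Assume every degree-five face has non-positive facial curvature; then every positive-curvature face is a degree-\(n\) face \(f\) as above, and the point is to bound from above the facial curvature of each first-generation pentagon and show that the region consisting of \(f\) together with its \(n\) first-generation pentagons has strictly negative total curvature.

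The missing ingredient is a finite combinatorial enumeration, not dihedral-angle rigidity of a capping disk. Since degree-three vertices of different types cannot be adjacent (they must be separated by a vertex of degree at least five, as the paper deduces from Table \ref{table:angles}), one can list the admissible vertex configurations of a first-generation pentagon, which already carries at least two vertices of type \((5^2,n)\). The configuration of largest curvature is \((5^2,n)^4(5^2,n^2)\), with facial curvature \(\pi(110/n-17)/30<0\) for \(n\geq 7\); consequently \(f\) plus its first generation has total curvature \(13\pi/3-19\pi n/30<0\) for \(n\geq 7\). Since each second-generation face either has degree five (non-positive by hypothesis) or is a degree-\(n\) face with a degree-four vertex (negative by Lemma \ref{verts n face}), summing over all positive-curvature faces and their first generations bounds the total curvature of the sphere by a negative quantity, contradicting the value \(4\pi\). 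Your own diagnosis is accurate --- a naive averaging over the ring cannot close the argument --- but the fix is this case analysis of vertex types on the ring pentagons, which your proposal does not carry out.
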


\begin{proof}
Suppose every face of degree five has non-positive curvature and let \(f\) be a degree \(n\) face with positive facial curvature. By lemma \ref{verts n face}, each vertex incident to \(f\) has configuration \((5^2,n)\) and so \(f\) is adjacent to \(n\) degree five faces, all of which have non-positive facial curvature. Each face in the first generation has at least two adjacent vertices of type \((5^2,n)\). As previously discussed, degree \(3\) vertices with different types cannot be adjacent and are separated by a vertex of degree at least \(5\). A configuration with two different types of degree three vertices has facial curvature at most \(\pi (160-47n)/75n\) when it is of type \((5^2,n)^2(5^4,n)(5^3)(5^4,n)\). Each first generation has at most \(4\) degree three vertices since the remaining vertex is incident to two degree \(n\) faces. The configuration with only one type of degree three vertex with the largest facial curvature is \((5^2,n)^4(5^2,n^2)\) with facial curvature \(\pi(110/n-17)/30 \). Any other facial configuration has less facial curvature and we find that the most curvature a face in the first generation can have is in the configuration \((5^2,n)^4(5^2,n^2)\).

The sum of the facial curvature from \(f\) and its \(n\) first generation faces is \(13 \pi/3- 19 \pi n /30\) which is negative for \(n \geq 7\). None of the faces in \(f\)'s second generation can have positive curvature because each either has degree five or is a degree \(n\) face incident to a vertex of degree \(4\) of the form \((5^2,n)^4(5^2,n^2)\) and as a result has negative facial curvature.

Thus, by summing the curvature over all faces with positive facial curvature and their first generation faces we find that the surface has negative total curvature. This contradicts the assumption that the surface has genus zero and we conclude that there exists a degree five face with positive facial curvature.
\end{proof}

\begin{proof}[Proof of theorem \ref{pent-n}]
The proof is similar to the proof of theorem \ref{pent}. Suppose that the set of counterexamples to the theorem is non-empty and let \(n\) be the lower bound on the number of faces of an element in the set. Let \(P\) be a counterexample with \(n\) faces. By lemma \ref{pent-face-pos} there exists a degree five face \(f\) with positive facial curvature. Since \(f\) has positive curvature, every vertex incident to \(f\) has positive vertex curvature. Moreover, \(f\) is incident to an odd number of vertices, implying that all vertices incident to \(f\) have the form \(5^3\). Thus all faces in the \(f\)'s first generation have degree five. If the second-generation faces were all degree five, then we could use the argument from the proof of theorem \ref{pent} to construct a counterexample with \(n-2\) faces. Thus, there exists a face with positive curvature such that a face in its second generation has degree larger than \(5\). Without loss of generality, assume that \(f\) is this face. If a face in \(f\)'s first generation had positive curvature then we could use polyhedral surgery, as in the proof of theorem \ref{pent}, to construct a counterexample to theorem \ref{pent-n} with \(n-2\) faces. The same argument implies that no degree five face in the second generation can have positive curvature. 

Let \(g\) be a face with degree greater than five in the second-generation. This face shares two vertices with a face \(h\) in the first generation. Since \(h\) has degree five and two vertices of the form \(5^3\), the two vertices it shares with \(g\) must have degree at least four. Since \(g\) has two degree four vertices its facial curvature is \(-\frac{\pi  (n-5) (n-1)}{15 n}\) which is negative for \(n \geq 6\). The vertex configuration of a first-generation face which maximizes the facial curvature of the face is the one with the fewest degree \(n\) vertices. The first generation faces contribute the most curvature with three of type \((5^3)^2(5^4)^3\), one of type \((5^3)^2(5^3,n)(5^2,n)(5^4)\), and one of type \((5^3)^2(5^4)(5^3,n)(5^4)\). The sum of the facial curvature of \(f\) and its first generation faces is \((5-2n )\pi/(3n)\) which is negative for \(n \geq 4 \). Since no second-generation face has positive curvature, the sum over all positive curvature faces, of the facial curvature of a face and its first-generation neighbors, gives an upper bound on the total curvature of the surface. However, this upper bound is negative which contradicts our assumption that the surface has genus zero.
\end{proof}

\section{\((4,8)\)-RPSs}

RPSs with faces of degree four or eight have both vertices with zero curvature and faces with zero facial curvature. Since positive curvature and negative curvature faces can be separated by large regions of zero curvature faces, we cannot use the curvature of local regions to rule out certain configurations as in the previous sections. However, these surfaces have additional structure which is not present in RPSs with faces of degree five. Let \(P\) be a RPS with data \((\Gamma, \Sigma, \psi)\). The geometric realization of each face in the graph is composed of pairs of parallel edges. 
\begin{defn}
A \emph{band} \(B_{e,f}\) is a simple cycle in the dual graph \(\bar{ \mathcal G}\) of \(\mathcal G\) starting at edge \(e\) and face \(f\) of \(\mathcal G\) with the property that the geometric realizations of the primal edges associated to consecutive edges in the cycle are parallel translates (in \(R^3\)) of each other. 
\end{defn}
The \emph{dual graph} \(\mathcal{ \hat G} \) of \(\mathcal G\) is the graph whose vertices correspond to faces of \(\mathcal G\) and where two vertices in \(\mathcal{\hat G}\) are adjacent exactly when the corresponding faces in the primal graph \(\mathcal G\) share an edge. 

We can cut \(\Sigma\) along the edges in \(\mathcal G\) bounding a band \(B_{e,f}\). When the RPS has genus zero the cut disconnects \(\Sigma\) into two hemispheres \(H_1\) and \(H_2\), and an annulus. If every face in the band has degree four, then we can glue the two hemispheres together by identifying pairs of boundary edges, and their incident vertices, which were incident to the same face in the band \(B_{e,f}\) to form a new surface \(P'\). The geometric realization \(\psi'\) of \(P'\) is
\[\psi'(x)=\begin{cases} \psi(x)-e \qquad & \text{if } x\in H_1, \\ \psi(x) \qquad& \text{if } x \in H_2\end{cases}\]
where \(\psi\) is the realization of the original surface. We call the process of removing a band and gluing two hemispheres together \emph{band surgery}. The new surface \(P'\) satisfies all conditions of being a RPS except for one: two adjacent faces may have the same image under \(\psi\). Removing every such pair of dangling faces forms an actual RPS with the same genus as \(P\). If every face in a genus zero RPS has degree four then we can use band surgery to prove the following theorem. 

\begin{thm}\label{thm: square}
Every genus zero RPS with faces of degree four can be realized as a union of cubes glued together along common facets. 
\end{thm}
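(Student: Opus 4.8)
The plan is to mimic the minimal-counterexample argument used for Theorem \ref{pent}, but with band surgery in place of polyhedral surgery. Suppose the set of genus zero \((4)\)-RPSs that are \emph{not} unions of cubes is nonempty, and let \(P\), with data \((\Sigma,\mathcal G,\psi)\) and \(n\) faces, be a member with the fewest faces. Since \(P\) has genus zero, the discrete Gauss--Bonnet theorem gives total curvature \(4\pi>0\), so \(P\) has a vertex of positive curvature. With every face a unit square, the curvature at a degree \(d\) vertex is \(\pi(4-d)/2\), which is positive only when \(d=3\); there three mutually perpendicular squares meet exactly as at a corner of a cube, and in particular the vertex figure is a spherical octant, so all three incident dihedral angles equal the cube angle \(\pi/2\). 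I would use such a corner to fix a coordinate frame in which the three incident edges point along the coordinate axes, anchoring the relevant edge directions to the integer lattice.

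Next I would select a band \(B_{e,f}\) through one of the three faces \(f\) at this corner and perform band surgery on it. Because every face of \(P\) has degree four, the hypothesis needed for band surgery holds automatically, so cutting \(\Sigma\) along the two boundary cycles of the band separates it into two hemispheres and a unit-height prismatic annulus; gluing the hemispheres after the prescribed translation by \(-e\) produces a genus zero surface \(P'\) which, after deleting any dangling pairs of coincident faces, is again a \((4)\)-RPS with strictly fewer than \(n\) faces. By minimality, \(P'\) is a union of cubes. The base case of the induction is the single cube, the unique genus zero \((4)\)-RPS with six faces, which is trivially a union of cubes.

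It then remains to reverse the surgery: reopening the gluing cycle of \(P'\), translating one hemisphere back by \(e\), and re-inserting the band recovers \(P\). The heart of the argument, and the step I expect to be the main obstacle, is showing that this re-insertion exhibits \(P\) as a union of cubes. By construction the band is a prism of unit height whose rungs are all parallel translates of the single edge \(e\), and whose two boundary cycles differ by the translation \(e\); filling the slab it bounds therefore amounts to thickening, by one unit along \(e\), the region of the cube complex \(P'\) cut off by the gluing cycle. The difficulty is to verify that this slab is itself a union of unit cubes glued onto \(P'\) along common facets, rather than a skew or non-rectilinear prism. This requires controlling the geometry of the band: one must show that its cross-sectional cycle is a lattice polygon in the plane perpendicular to \(e\), using the axis-alignment forced at the seed corner together with the fact that \(P'\), being a union of cubes, is already supported on the integer lattice, so that its gluing cycle has only axis-parallel edges. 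Once this compatibility is established, \(P\) is exhibited as \(P'\) with one slab of unit cubes attached, contradicting the assumption that \(P\) is not a union of cubes and completing the induction.
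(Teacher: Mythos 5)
Your proposal follows essentially the same route as the paper's proof: induction on the number of faces (your minimal-counterexample formulation is equivalent to the paper's complete induction), band surgery through a band of degree-four faces to pass to a smaller \((4)\)-RPS \(P'\) after deleting dangling pairs, the cube as base case, and then reversing the surgery by inserting a layer of unit cubes along the gluing cycle, regluing a cube for each deleted dangling pair. Your extra step of anchoring coordinates at a degree-three corner is superfluous—once \(P'\) is known by induction to be a union of cubes, all edges of the gluing cycle are lattice-aligned, and since \(e\) is perpendicular to every edge of that (planar) cycle it is forced into the remaining axis direction—but your explicit treatment of this re-insertion compatibility is, if anything, more careful than the paper's own terse version of the same step.
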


\begin{proof}
We use complete induction on the number of faces in the surface to prove the theorem. Since the total curvature is \(4\pi\) and the most curvature a vertex can have is \(\pi/2\), there are at least eight vertices in the surface. Likewise, the facial curvature can be as large as \(2\pi/3\) when all vertices are degree three. Thus there are at least six faces in the surface. The only possible geometric realization of a RPS with six faces is that of a cube. This proves the base case of the theorem. For our inductive hypothesis we assume the theorem is true for all surfaces with fewer than \(n\) faces with \(n \geq 6 \). Let \(P\) be a RPS with \(n\) faces. Let \(B_{e,f}\) be a band in the surface through face \(f\) with edges that are parallel transports of \(e\) (in their geometric realizations). Since all faces in the surface have degree four we can use band surgery to remove \(B_{e,f}\) and form a new surface with fewer than \(n\) faces. Remove all pairs of dangling faces until what remains is a RPS which we call \(P'\). By induction it is a union of cubes. Let \(\gamma_1\) and \(\gamma_2\) denote the boundary edges of \(B_{e,f}\) in \(P\) which are identified to form \(P'\). Since \(P'\) is a union of cubes, the arc \(\gamma_1\) can be realized by a \emph{cycle} in a surface built by gluing cubes together along common facets. A \emph{cycle} is a sequence of alternating edges and vertices starting and ending at the same vertex such that each edge is incident to the two vertices preceding and succeeding it in the sequence and without repeated edges. Splitting \(P'\) along \(\gamma_1\) and inserting a new layer of cubes forms a surface \(\tilde P\). For each pair of dangling faces that was removed we glue in a cube, possibly identifying faces of neighboring cubes. The resulting RPS \(Q\) can be realized as a union of cubes. Since \(Q\) has the same genus as \(P\) and the surface graphs of the two surfaces are isomorphic, we conclude that \(P\) can be realized as a union of cubes.
\end{proof}

When faces of degree eight are also present in the surface, the surface may not have a band in which every face has degree four. Band surgery doesn't work on bands with degree eight faces because after cutting the band out and gluing the two hemispheres together, the resulting surface may not have a valid geometric realization. However, we can still use bands to help us characterize the structure of these surfaces. A path in the dual graph has a \emph{turning point} at a primal face \(f\) if the two faces adjacent to \(f\) in the path intersect \(f\) along edges which are not parallel translates of each other in the geometric realization of the surface. A \emph{band bigon} is a simple cycle in the dual graph with exactly two turning points. Every band bigon is formed by two intersecting bands which intersect at the primal faces corresponding to the two turning points. The faces of the bigon are the primal faces corresponding to the dual vertices of the bigon. The geometric realization of the two turning points are faces in the RPS which lie in parallel planes in \(\R^3\) since both faces contain parallel transports of the edges determining the bands. At a turning point the dot product between the two unit vectors determining the bigon is either \(0, 1/\sqrt{2}\) or \(-1/\sqrt{2}\). Since the geometric realization of the RPS sends every edge of the graph to a unit vector in \(\R^3\), we will often abuse notation and identify an edge with its corresponding unit vector in \(\R^3\).

Cutting \(\Sigma\) along the boundary of a bigon disconnects the surface into two disks and an annulus. If one of the disks does not contain any bigons then the bigon is said to be \emph{minimal}. We call the subset of \(\Sigma\) corresponding to the union of the annulus and the disk which doesn't contain any bigons, the \emph{interior} of the bigon. The \emph{strict interior} of the bigon is just the disk which contains no bigons. It is an easy consequence of the Jordan curve theorem that on a genus zero RPS, the bands forming a minimal bigon pass through adjacent edges of the faces corresponding to the turning points of the bigon (see lemma \ref{lem: no sq oct}).

After a careful analysis of all minimal bigons, we show that the there are only a few possible geometric realizations of the interior of a minimal bigon. For any minimal bigon, the arc \(\gamma\) bounding the interior can be realized as a cycle on a surface built out of a union of cubes and octagonal prisms glued together along common facets.
This result is established in a sequence of lemmas and is crucial in the proof of theorem \ref{square-oct}. The first lemma in the sequence states that the turning points of a minimal bigon are either both squares or both octagons. To prove this we use the elementary fact that any band which passes through the interior of a minimal bigon must cross both bands forming the bigon. This is an easy corollary of the Jordan curve theorem combined with the assumption of minimality. A genus zero RPS cannot have a simple cycle in the dual graph with exactly one turning point (a \emph{monogon}) because no band can pass through two edges of a face which are not parallel to each other.

\begin{lem}\label{lem: no sq oct}
On a genus zero RPS, no minimal bigon can have a square at one turning point and an octagon at the other.
\end{lem}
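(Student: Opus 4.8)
The plan is to extract from the bigon a single numerical invariant---the angle between the two bands that form it---and to show that a square turning point and an octagon turning point would force this invariant to take two incompatible values. Let the minimal bigon be bounded by the bands \(B_1\) and \(B_2\), which by the preceding discussion meet exactly at the two turning points \(t_1\) and \(t_2\); in particular both bands pass through both turning-point faces. Since every primal edge of a band is a parallel translate of every other, the geometric realization assigns to \(B_1\) a single edge direction \(u\in\R^3\) (as an unoriented line) and to \(B_2\) a single edge direction \(v\), and these directions are literally the same at \(t_1\) as at \(t_2\). Hence \(|u\cdot v|\), computed with unit vectors, is one fixed number attached to the bigon, and I would compute it twice, once at each turning point.

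First I would pin down which edges of each turning-point face carry the two bands. At a turning point the bigon boundary enters the face along an edge of one band and leaves along an edge of the other, and I claim these two edges are \emph{adjacent} in the face. This is the one genuinely combinatorial step, and it is where minimality and the Jordan curve theorem enter: if the two edges were non-adjacent, then some edge of the face would lie strictly between them on the interior side, and the band through that edge would enter the strict interior; but any band meeting the strict interior must cross both \(B_1\) and \(B_2\), and tracing such a crossing produces a bigon contained in the strict interior, contradicting minimality. I expect this adjacency argument to be the main obstacle, since it requires care in reading off the planar picture of the cut-open disk and in excluding the various ways a third band could close up. It is also the step that genuinely distinguishes octagons from squares: without adjacency an octagon could carry two bands through perpendicular edges and mimic a square.

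Granting adjacency, the conclusion is immediate. Two adjacent edges of a regular square meet at interior angle \(\pi/2\), so their directions are orthogonal and \(|u\cdot v|=0\); two adjacent edges of a regular octagon meet at interior angle \(3\pi/4\), so the angle between their direction lines is \(\pi/4\) and \(|u\cdot v|=1/\sqrt{2}\) (consistent with the admissible turning-point values \(0,\pm 1/\sqrt{2}\)). If \(t_1\) were a square and \(t_2\) an octagon, then evaluating the fixed quantity \(|u\cdot v|\) at \(t_1\) would give \(0\) while evaluating it at \(t_2\) would give \(1/\sqrt{2}\), which is absurd. Therefore the two turning points must be of the same type, and in particular a minimal bigon on a genus zero RPS cannot have a square at one turning point and an octagon at the other.
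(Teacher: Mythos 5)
Your proof is correct and is essentially the paper's argument: the crux in both is that if the two band directions met the octagon turning point in non-adjacent (perpendicular) edges, then the band through an edge lying between them would enter the bigon's interior and, by the Jordan curve theorem, would have to exit by crossing \(B_1\) or \(B_2\), contradicting minimality. The paper packages this by first reading off perpendicularity of the two band directions at the square and then deriving the contradiction at the octagon, while you package it as an adjacency claim at both turning points followed by comparison of the constant invariant \(|u\cdot v|\) (\(0\) versus \(1/\sqrt{2}\)); the substance, including the reliance on constancy of edge directions along a band, is the same.
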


\begin{proof}
Suppose two distinct bands \(B_v\) and \(B_h\) form a minimal bigon with one turning point a degree four face and the other a degree eight face. Label the faces \(S\) and \(O\) respectively. Since the two bands cross in a degree four face the dot product of their corresponding unit vectors must satisfy \(|v \cdot h|=1\). Thus the geometric realizations of \(S\) and \(O\) are parallel to the plane spanned by \(v\) and \(h\). The edges on \(O\) which are parallel transports of \(v\) and \(h\) cannot be adjacent which implies that there must be an edge between them and a band through this edge which passes through the interior of the minimal bigon. However, the RPS is topologically a sphere so by the Jordan curve theorem this band must exit the bigon and in the process cross either \(B_v\) or \(B_h\). In either case this contradicts the assumption of minimality.
\end{proof}

We are now able to classify bigons based on the types of their turning points. A minimal square bigon has squares at each of its two turning points and similarly a minimal octagon bigon has octagons at each of its two turning points.

\begin{lem}\label{lem: int oct}
Suppose \(B_v\) and \(B_h\) are two bands forming a minimal octagon bigon on a genus zero RPS. Two bands, neither of which are part of the bands forming the bigon, cannot cross in the interior of this minimal bigon.
\end{lem}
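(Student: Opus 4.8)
The plan is to argue by contradiction. Suppose two bands \(B_a\) and \(B_b\), neither equal to \(B_v\) or \(B_h\), cross at a face \(F\) lying in the interior of the minimal octagon bigon. The first step is to pin down how \(B_a\) and \(B_b\) reach the interior. By the fact recorded just before Lemma \ref{lem: no sq oct} — that on a genus zero RPS any band meeting the interior of a minimal bigon must cross both bands forming the bigon — each of \(B_a\), \(B_b\) crosses both \(B_v\) and \(B_h\). I would first observe that such a band cannot cross the same bounding band (say \(B_v\)) twice while remaining inside, since the two crossing faces would be turning points of a new bigon contained in the strict interior, contradicting minimality. Hence inside the interior each of \(B_a\), \(B_b\) appears as a chord running from the arc of \(B_v\) to the arc of \(B_h\), and since they meet at \(F\) their four boundary endpoints must interleave around the bounding cycle.

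The second step is to isolate the role of the two turning octagons \(O_1, O_2\). As the octagonal prism already shows, a band that enters the interior \emph{through a turning point} (rather than through the open arcs of \(B_v\) or \(B_h\)) can legitimately cross an interior band — for instance the equatorial band and a lateral band of a prism cross at a single interior square. So I would exploit that both \(B_a\) and \(B_b\) meet \(B_v\) and \(B_h\) at ordinary, non-turning faces, where the dot product of the two relevant band directions is forced into \(\{0,\pm 1/\sqrt{2}\}\), and where \(O_1\) and \(O_2\) are realized as octagons lying in parallel planes of \(\R^3\).

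The heart of the argument is to convert the crossing at \(F\) into a bigon lying in the strict interior, contradicting minimality. Pure topology does not suffice: two simple closed curves on the sphere meet an even number of times, but a single interior crossing of \(B_a\) and \(B_b\) is a priori consistent with their second crossing lying outside the interior. I would therefore feed in the geometry — using the parallel-plane realization of \(O_1, O_2\) and propagating the admissible directions \(a,b,v,h\) through the chord endpoints via the dihedral/dot-product data — to show that the two interleaving chords \(B_a\), \(B_b\) must in fact re-cross inside the interior, yielding a bigon in the strict interior. I expect this final step to be the main obstacle: ruling out the ``cross once inside, once outside'' configuration is precisely what cannot be done by a combinatorial crossing-parity count alone, and it forces one both to invoke the parallel-octagon geometry and to treat with care the bands that reach the interior through the octagon turning points themselves.
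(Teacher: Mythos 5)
There is a genuine gap: your proposal is a strategy outline whose decisive step is explicitly left undone. Everything before the final paragraph is setup (chords, interleaving, no double crossings), and then the actual contradiction --- ``the two interleaving chords must in fact re-cross inside the interior, yielding a bigon in the strict interior'' --- is deferred with the admission that you expect it to be the main obstacle. That step is precisely the proof, and it is not how the paper argues at all. The paper never produces a second crossing or a smaller bigon. Instead it (i) fixes coordinates \(v=(1,0,0)\), \(h=\tfrac{1}{\sqrt 2}(1,1,0)\) using the fact that the turning points are regular octagons in parallel planes, (ii) proves as a first step that every face of \(B_v\) and \(B_h\) strictly between the two turning octagons has degree four --- this is a short dot-product analysis: an octagon on \(B_v\) would have consecutive entering edge directions \(a,b,c\) with \(|a\cdot v|=|c\cdot v|=|a\cdot b|=|b\cdot c|=1/\sqrt2\), \(|b\cdot v|=0\), and the requirement that \(B_a,B_b\) also cross \(B_h\) kills every candidate direction --- and then (iii) finishes in one line: a crossing face in the interior, not lying on \(B_v\) or \(B_h\), has two orthogonal edge directions \(e,g\); by minimality \(B_e\) and \(B_g\) each cross both bounding bands, and by (ii) those crossings occur at squares, so \(|e\cdot v|=|e\cdot h|=|g\cdot v|=|g\cdot h|=0\); but then \(e\) and \(g\) are both parallel to \(v\times h\), contradicting \(e\cdot g=0\). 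The sub-lemma (ii) is the ingredient your plan never identifies, and once you have it your re-crossing machinery is unnecessary: the direction constraints alone forbid even a single interior crossing, so there is nothing topological left to do. Conversely, without (ii) your geometric ``propagation of admissible directions'' has no control over crossings of the bounding bands at octagons (where the dot product can be \(\pm 1/\sqrt 2\) rather than \(0\)), so the step you deferred cannot be completed as sketched.

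One thing you noticed is real, though your example for it is wrong. The lemma, read literally with the paper's definition of ``interior'' (annulus plus bigon-free disk), does fail at the turning points: in an octagonal prism, two lateral bands through the other pairs of octagon edges --- neither of them \(B_v\) nor \(B_h\) --- cross at the two turning octagons, which belong to the interior. The paper's proof implicitly excludes this by the clause ``this face cannot lie on \(B_h\) or \(B_v\),'' so the statement should be read as concerning crossings away from the turning points, which is all that is used afterwards (the conclusion being that the interior is part of an octagonal prism). Your proposed illustration, however --- ``the equatorial band and a lateral band of a prism cross at a single interior square'' --- is not an instance of this: inside the minimal bigon the equatorial band crosses only the two bounding bands (at the interior squares \(S_1, S_2\)), and its crossings with other lateral bands occur at squares outside the bigon. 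So you flagged the right subtlety but located it at the wrong faces; the exceptional crossings occur exactly at the octagonal turning points, and any correct write-up must carve them out explicitly.
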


\begin{proof}
Let \(B_h\) and \(B_v\) be the two bands forming the minimal octagon bigon and let \(O_1\) and \(O_2\) be the two octagons at the turning points of the bigon. As previously noted, the bands of the bigon must pass through adjacent edges of the octagons at the turning points. Since the geometric realizations of \(O_1\) and \(O_2\) are regular octagons with unit edge lengths, the angle between \(v\) and \(h\) is fixed so that \(|h \cdot v | = 1/\sqrt{2}\). By changing coordinates we may assume \(v=(1,0,0)\) and \(h=1/\sqrt{2} (1,1,0)\). Since any band through the bigon must exit, the dot product of the unit vector associated to any band through the bigon with \(v\) or \(h\) is either \(0, 1/\sqrt{2}\) or \(-1/\sqrt{2}\).

First we show that the faces along \(B_h\) and \(B_v\) in the minimal bigon in between \(O_1\) and \(O_2\) all have degree four. Suppose there were an octagon on the boundary of the minimal bigon and without loss of generality that it's on \(B_v\). Let \(a,b\) and \(c\) denote the directions of the geometric realizations of the consecutive edges on this octagon with bands through them that enter the bigon. Since the realization of this face is a regular octagon we must have the following relations 
\[ | a \cdot v|=|c \cdot v| = |a \cdot b| = | b \cdot c |= 1/\sqrt{2} \text{ and } |b \cdot v| =0.\]
However, one of the edges of the octagon is a parallel transport of \(v\) and since the realization of the octagon is a plane these equations cannot all be satisfied. For \(|b \cdot v|=0\) implies \(b=(0,b_2, \pm \sqrt{1-b_2^2})\) but \(B_b\) must cross \(B_h\) which implies \(b_1=0 \text{ or } \pm 1\). Then there are six possibilities for \(a\), 
\[ a= 1/\sqrt{2}(1, \pm 1,0), 1/\sqrt{2}(1,0, \pm 1) \text{ or } 1/\sqrt{2}(0,1 \pm 1)\]
but none of these directions are allowed because \(B_a\) must cross \(B_h\). 

Now suppose two bands cross in a face in the interior of the bigon. Since this face cannot lie on \(B_h\) or \(B_v\), there are two edges \(e \) and \(g\) on the face with \(e \cdot g=0\). By the assumption of minimality \(B_e\) crosses both \(B_h\) and \(B_v\). Likewise \(B_g\) crosses both \(B_h\) and \(B_v\) as well. Since there are no octagons along \(B_h\) and \(B_v\), these crossings must all occur at degree four faces. Thus,
\[ | e\cdot h| = |g \cdot h|= |e \cdot v| = |  g\cdot v|=0\]
but there do not exist vectors in \(\R^3\) which satisfy both the above equations and \(e\cdot g =0\).

\end{proof}
The preceding lemma implies that the geometric realization of a minimal octagon bigon is in fact part of an octagonal prism. In other words, every face in the interior of the minimal bigon has degree four except for the two degree eight turning points. The dihedral angles between degree four faces are either \(\pi\) or \(3\pi/4\) and between degree eight and degree four faces the angles are \(\pi/2\). Moreover, every face in the interior of the bigon lies on one of the bounding bands and all bands that cross through the bigon are parallel transports of the same unit vector.

A minimal bigon is bounded by two cycles one of which is not adjacent to a face in the interior of the bigon. Let \(C\) be this cycle. Cutting \(\Sigma\) along \(C\) disconnects the surface into two hemispheres \(H^1\) and \(H^2\). Assume that \(H^1\) is the hemisphere containing the interior of the minimal bigon. As shown in the preceding paragraph, we can glue a hemisphere from a RPS which can be realized as an octagonal prism to \(H^1\) resulting in a RPS whose realization is an octagonal prism. Gluing the other hemisphere to \(H^2\) forms a new RPS with data \((\Sigma' , \mathcal G', \psi'\) in which \(\mathcal G'\) has two fewer degree eight faces than the original graph \(\mathcal G\). By construction, \(\Sigma'\) is homeomorphic to \(\Sigma\) so this operation does not change the genus of the surface. The geometric realization is defined from the original geometric realization \(\psi\) as \(\psi'(x)=\psi(x)\) for \(x\in H^2\) and extended by linearity to the complement \(\Sigma' \setminus H^1\). This operation is a special case of polyhedral surgery and we refer to it as \emph{octagon removal surgery}. It is possible that octagon removal surgery creates dangling faces, but after modifying \(C\) to include one of these faces there will be no dangling faces in the new surface. For later reference we record the content of this paragraph as the following lemma. 

\begin{lem}\label{lem: oct red}
Octagon removal surgery removes two degree eight faces from any genus zero RPS with a minimal octagon bigon.
\end{lem}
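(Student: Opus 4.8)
The statement records the octagon removal surgery constructed in the preceding paragraph, so the plan is to verify that this surgery is well-defined and has the three advertised properties: it preserves the genus, deletes exactly two degree eight faces, and outputs a genuine RPS. The structural input is Lemma~\ref{lem: int oct} together with the discussion following it, which tells us that every face in the interior of a minimal octagon bigon has degree four except for the two octagons at the turning points, that the dihedral angles along the interior are exactly those of an octagonal prism, and that all bands crossing the bigon are parallel translates of a single unit vector. Consequently the geometric realization of the interior agrees with a portion of an octagonal prism, and in particular the bounding cycle \(C\) (the boundary of the interior not adjacent to any interior face) is realized as a cycle lying in the boundary of an octagonal prism.

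Next I would carry out the cut and the replacement. Because the surface has genus zero, cutting \(\Sigma\) along \(C\) separates it, by the Jordan curve theorem, into the hemisphere \(H^1\) carrying the bigon interior (hence both turning-point octagons) and a complementary hemisphere \(H^2\). Using the prism structure I would exhibit a replacement cap built from square faces of an octagonal prism that shares the realization of \(C\), and glue it to \(H^2\) along \(C\) in the orientation-reversing way prescribed by polyhedral surgery, defining \(\psi'\) to equal \(\psi\) on \(H^2\) and the prism realization on the cap. The bookkeeping is then immediate. Replacing the disk \(H^1\) by another disk produces a surface \(\Sigma'\) homeomorphic to \(\Sigma\), so the genus is unchanged; the only octagons inside \(H^1\) are the two turning points and the cap contributes none, so \(\mathcal G'\) has exactly two fewer degree eight faces; and continuity of \(\psi'\) across \(C\) holds because the two realizations of \(C\) agree by construction, while the interior dihedral angles recorded after Lemma~\ref{lem: int oct} guarantee that adjacent faces meet as the definition of a RPS requires.

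The step I expect to be the main obstacle is confirming that \(P'\) satisfies the non-degeneracy clause in the definition of a RPS, namely that no two adjacent faces share a geometric realization. Gluing the cap to \(H^2\) can create exactly such a pair of dangling faces along \(C\). I would resolve this as in the general discussion of polyhedral surgery: before cutting, enlarge \(C\) so that any offending face is absorbed into \(H^1\), ensuring that after the replacement no two faces on opposite sides of the new cut coincide. With this modification the output is a genuine genus zero RPS with two fewer degree eight faces, which is the assertion of the lemma.
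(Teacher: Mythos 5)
Your proof is correct and matches the paper's own argument essentially step for step: the paper likewise invokes Lemma~\ref{lem: int oct} to identify the bigon's interior with part of an octagonal prism, cuts along the bounding cycle \(C\), glues in the complementary prism hemisphere (which contains no octagons) to get a homeomorphic surface with two fewer degree eight faces, and resolves possible dangling faces by modifying \(C\). Nothing further is needed.
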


Square bigons are potentially more complicated, however in some cases they turn out to be very simple to analyze. 

\begin{lem}\label{lem: sq bigon}
If a genus zero RPS has a minimal square bigon without any degree eight faces on the boundary of the bigon, then the dihedral angle between any two faces in the interior of the bigon, in the realization of the surface, is \(\pi\). In particular, the realization of a minimal square bigon that does not contain an octagon consists of four faces of a rectangular prism. 
\end{lem}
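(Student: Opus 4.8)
The plan is to adapt the analysis of the minimal octagon bigon in lemma \ref{lem: int oct} to the square case. First I would fix convenient coordinates. Since both turning points are squares (lemma \ref{lem: no sq oct}), the two bounding bands \(B_v\) and \(B_h\) cross through perpendicular pairs of opposite edges, so \(v \cdot h = 0\); after an isometry of \(\R^3\) I may take \(v = (1,0,0)\) and \(h = (0,1,0)\). The two turning-point squares then lie in planes parallel to the \(xy\)-plane. Because we are given that no degree eight face lies on the boundary of the bigon, every face along \(B_v\) and \(B_h\) between the turning points is a square, and in particular each crossing of the bounding bands by another band occurs at a square, where the two crossing directions are perpendicular.

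Next I would rule out interior crossings exactly as in lemma \ref{lem: int oct}. Suppose two bands, neither of which is \(B_v\) or \(B_h\), cross at a face \(F\) of the strict interior. Since the boundary carries no octagon, \(F\) is a square and the two bands meet along perpendicular edges \(e,g\) with \(e \cdot g = 0\). By minimality neither band can meet a single bounding band twice, for that would enclose a sub-bigon in the strict interior; hence each of \(B_e\) and \(B_g\) crosses \(B_v\) once and \(B_h\) once, and both crossings occur at squares. This forces \(e \cdot v = e \cdot h = g \cdot v = g \cdot h = 0\), so that \(e\) and \(g\) are both parallel to \(v \times h = (0,0,1)\), contradicting \(e \cdot g = 0\). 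Therefore no such crossing exists, and the same perpendicularity computation shows that any band entering the strict interior (other than \(B_v, B_h\)) crosses each bounding band exactly once at a square and is thus a parallel translate of the single vector \((0,0,1)\).

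From this I would read off the edge directions of the interior faces. Each interior square is a face of exactly two bands; one of its edge-pairs must be cut by a vertical band and is therefore parallel to \((0,0,1)\), while the complementary edge-pair is cut by a bounding band (it cannot be a second vertical band, as a square has only one edge-pair parallel to \((0,0,1)\), nor both \(B_v\) and \(B_h\), as that would make \(F\) a turning point). Hence the second direction of every interior face is \(v\) or \(h\). I would then show, using the Jordan curve theorem together with minimality and the fact that \(B_v\) and \(B_h\) meet only at the two turning points, that only one of the two bounding bands penetrates the strict interior, so this second direction, say \(h\), is common to all interior faces. Then every interior face is parallel to the fixed plane spanned by \(h\) and \((0,0,1)\); since the strict interior is connected they are coplanar, every dihedral angle between interior faces equals \(\pi\), and when the bigon moreover contains no octagon at all the two turning-point squares serve as end caps, identifying the realization with four faces of a rectangular prism.

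The main obstacle is precisely the coplanarity step. The equatorial band of a cube shows that a band passing straight through a face without a turning point need not join coplanar faces, so flatness cannot be deduced from band directions alone. The real content is excluding the configuration in which one interior face lies on \(B_v\) while an adjacent interior face lies on \(B_h\): such a pair would share an edge parallel to \((0,0,1)\) and meet at the cube-style dihedral angle \(\pi/2\). I expect that ruling this out requires the minimality argument sketched above, showing that if both bounding bands had arcs running through the strict interior then some vertical band would be forced to cross one of them twice, producing an interior sub-bigon and contradicting the hypothesis that the bigon is minimal.
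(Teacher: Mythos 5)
Your opening steps coincide with the paper's own proof: after fixing coordinates with \(v\cdot h=0\), you rule out a crossing of two bands in the strict interior because both crossing directions would have to be orthogonal to both \(v\) and \(h\), giving four mutually perpendicular vectors in \(\R^3\); hence every face of the interior lies on one of the two bounding bands, with one edge-pair parallel to \((0,0,1)\) and the other parallel to \(v\) or \(h\). (One small slip: ``since the boundary carries no octagon, \(F\) is a square'' is a non sequitur, as the hypothesis concerns only boundary faces; it is harmless, since for an octagonal \(F\) you may pick two perpendicular bands through it and run the same computation.) At this point you have also recovered the paper's first paragraph: two faces adjacent \emph{along the same band} span the same pair of directions, so they are coplanar and meet at dihedral angle \(\pi\). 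The paper's proof stops here.

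The gap is your final step, and it cannot be repaired, because what you are trying to prove there is false. Take the cube (or a \(1\times 1\times n\) box): the bigon through top, front, bottom and left faces is a minimal square bigon with no octagons anywhere; its strict interior is the degenerate disk around the vertical edge shared by the front and left strips; and the interior faces on the two different bands are adjacent along that vertical edge and meet at dihedral angle \(\pi/2\). So ``only one of the two bounding bands penetrates the strict interior'' fails (both arcs bound it), every vertical band crosses each bounding band exactly once, and no sub-bigon appears — your proposed minimality mechanism produces no contradiction. The configuration you single out as ``the real content\ldots to be excluded'' is in fact unavoidable: it is precisely what makes the realization close up into \emph{four faces of a rectangular prism}, namely two square caps at the turning points plus two flat strips lying in perpendicular planes, and it is exactly the \(\pi/2\) corner that the cube flip and prism removal surgery later exploit. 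The lemma's phrase ``any two faces in the interior'' is loosely worded (the paper's own proof has the same looseness) and must be read as ``any two adjacent faces lying on the same band''; that weaker statement is what the paper proves, is all that is true, and is all that the ``in particular'' conclusion requires. A proof attempt that insists on global coplanarity of the interior is therefore chasing a stronger, false claim.
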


\begin{proof}
Let \(B_a\) and \(B_b\) be the two bands forming the minimal square bigon. Suppose that two adjacent faces have a dihedral angle which is not \(\pi\) and let \(B_c\) and \(B_d\) be the two bands that pass through these two faces. Since both \(c\) and \(d\) are orthogonal to \(a\) and \(b\), \(c\) must be parallel to \(d\) which implies that the dihedral angle between the faces is \(\pi\). 

Now, suppose there is a face strictly in the interior of the bigon. Let \(B_c\) and \(B_d\) be the bands that cross at this face. Since both bands must pass through both sides of the bigon this creates four mutually perpendicular vectors in \(\R^3\) which is a contradiction. Thus the bigon consists of two bands that cross at two squares and every face in the interior of the bigon lies on one of the two belts, \(B_a\) or \(B_b\).
\end{proof}

When a minimal square bigon satisfies the conditions of \ref{lem: sq bigon}, its realization consists of four facets of a prism and we can use polyhedral removal surgery to remove these four facets and replace them by the other two facets of the prism in a manner analogous to octagon removal surgery. Polyhedral removal surgery applied to a prism reduces the number of faces in the surface's surface graph by \(2\). We call this special case of polyhedral surgery, \emph{prism removal surgery}. 

\begin{lem}\label{lem: edge dirs}
Let \(f\) be a face in the interior of a minimal square bigon and let \(e\) be a unit vector determined by the geometric realization of an edge incident to \(f\). Choose coordinates so that the bigon is formed by two bands, \(B_v\) and \(B_h\), with \(h=(1,0,0)\) and \(v=(0,1,0)\). Then the vector \(e\) is a parallel translate of one of eight possible unit vectors:
\[\frac{1}{\sqrt{2}}\left (1,\pm 1,0 \right), \frac{1}{\sqrt{2}} \left(1,0, \pm 1\right)), \frac{1}{\sqrt{2}} \left(0,1,\pm 1\right)) \text{ or }\left (0,0,\pm 1\right)).\]
\end{lem}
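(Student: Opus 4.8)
The plan is to read off the two coordinates $e\cdot h$ and $e\cdot v$ from the band structure and then let the unit-length constraint finish the job. Writing $e=(e_1,e_2,e_3)$ in the chosen coordinates, we have $e_1=e\cdot h$ and $e_2=e\cdot v$, and since $\|e\|=1$ the value of $e_3$ is determined up to sign once $e_1$ and $e_2$ are known. So the whole statement reduces to showing $e\cdot h,\,e\cdot v\in\{0,\pm 1/\sqrt{2}\}$ together with a short case check. The engine of the argument is the band $B_e$ generated by $e$, whose intersections with the two bounding bands constrain these dot products.

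The key step is to determine the admissible values of $e\cdot v$ and $e\cdot h$. Since $f$ lies in the interior of the minimal bigon, the band $B_e$ passes through the interior, and by the Jordan curve theorem together with minimality (the fact used in the proofs of Lemmas \ref{lem: no sq oct} and \ref{lem: int oct}, that any band entering the interior must cross both bounding bands) it must cross both $B_v$ and $B_h$. At the face where $B_e$ crosses $B_v$, the directions $e$ and $v$ occur as two \emph{distinct}, hence non-parallel, edge directions of a single face; in a $(4,8)$-RPS that face is a square or a regular octagon, and the dot product of two non-parallel edge directions of such a polygon is exactly $0$ or $\pm 1/\sqrt{2}$. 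Thus $e\cdot v\in\{0,\pm 1/\sqrt{2}\}$, and symmetrically $e\cdot h\in\{0,\pm 1/\sqrt{2}\}$. In particular $e$ is parallel to neither $v$ nor $h$: were $e$ parallel to $v$, the band $B_e$ would run parallel to $B_v$ and could not cross it, contradicting the previous sentence. This is exactly what excludes the values $e\cdot v=\pm 1$ and $e\cdot h=\pm 1$.

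It then remains only to enumerate. With $h=(1,0,0)$ and $v=(0,1,0)$ we get $e_1,e_2\in\{0,\pm 1/\sqrt{2}\}$, so $e_1^2,e_2^2\in\{0,\tfrac12\}$ and $e_3^2=1-e_1^2-e_2^2$. If $e_1=e_2=0$ then $e_3=\pm 1$, giving $(0,0,\pm 1)$; if exactly one of $e_1,e_2$ is nonzero then $e_3=\pm 1/\sqrt{2}$, giving $\tfrac{1}{\sqrt{2}}(1,0,\pm 1)$ and $\tfrac{1}{\sqrt{2}}(0,1,\pm 1)$; and if both are nonzero then $e_3=0$, giving $\tfrac{1}{\sqrt{2}}(1,\pm 1,0)$. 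Up to parallel translation these are precisely the eight listed vectors.

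The hard part will be justifying that \emph{every} edge $e$ of an interior face yields a band crossing both $B_v$ and $B_h$, equivalently that no interior edge is parallel to $v$ or $h$. This is where "interior" must be taken as the strict interior, so that $f$ lies on neither bounding band, and where the Jordan-curve/minimality fact does double duty: it forces $B_e$ to meet both bounding bands, which simultaneously rules out parallel edges and the forbidden dot-product values $\pm 1$. Once this is in place, the remaining ingredients — that a crossing face is a square or octagon, and that non-parallel edge directions of these polygons have dot product in $\{0,\pm 1/\sqrt{2}\}$ — are immediate.
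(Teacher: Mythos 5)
Your proof is correct and takes essentially the same route as the paper's: minimality plus the Jordan curve theorem force the band \(B_e\) to cross both \(B_v\) and \(B_h\), each crossing occurs at a square or octagon and so pins \(e\cdot v,\, e\cdot h \in \{0, \pm 1/\sqrt{2}\}\), and the unit-length constraint then yields exactly the eight listed directions. The only difference is presentational: you spell out the final enumeration and the exclusion of edges parallel to \(v\) or \(h\) (via the strict-interior caveat), both of which the paper leaves implicit.
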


\begin{proof}
Let \(B_e\) be the band through the face \(f\) starting at edge \(e\). Since the bigon is minimal, \(B_e\) must pass through both \(B_v\) and \(B_h\) and it must cross at either a square or an octagon. Thus the angle between \(e\) and \(v\) is either \(\pi/2\), \(3\pi/4\), \(5\pi/4\) or \(3\pi/2\). Likewise for the angle between \(e\) and \(h\). Therefore \(e\) is a parallel translate of the eight directions listed in the statement of the theorem.
\end{proof}

\begin{lem}\label{lem: adj bands}
Suppose there is a degree eight face on the boundary of a minimal square bigon on a genus zero RPS and let \(B_a,B_b\) and \(B_c\) denote the other three bands through this face in cyclic order. Each face on \(B_a\) in the interior of the bigon is adjacent to a face on \(B_b\). Similarly, each face on \(B_c\) in the interior of the bigon is adjacent to a face on \(B_b\).
\end{lem}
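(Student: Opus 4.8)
The plan is to work entirely in the fixed coordinate frame supplied by Lemma \ref{lem: edge dirs}, in which the bigon is bounded by the bands $B_v$ and $B_h$ with $v=(0,1,0)$ and $h=(1,0,0)$, and to first pin down the local picture at the octagon before propagating it along the bands. Since the octagon $O$ lies on the boundary of the bigon, it lies on one of the bounding bands; without loss of generality take it on $B_v$, so that $O$ carries a pair of opposite edges parallel to $v$. Writing the plane of $O$ with unit normal $n\perp v$, the four edge directions of the regular octagon are $v$, the in-plane perpendicular $w=v\times n$, and $\tfrac{1}{\sqrt2}(v\pm w)$. By minimality each of the three non-$B_v$ bands through $O$ must cross $B_h$, so its direction has dot product with $h$ in $\{0,\pm 1/\sqrt2\}$; imposing this simultaneously on the $w$-band and on a $\tfrac{1}{\sqrt2}(v\pm w)$-band forces $n=\pm h$. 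Hence $O$ lies in a plane normal to $h$, the middle band $B_b$ points along $w=(0,0,\pm1)$, and the two flanking bands point along $\tfrac{1}{\sqrt2}(0,1,\pm1)$. Because $B_v$ meets $O$ in two opposite edges, the three remaining interior-side edges are consecutive, so $B_a,B_b,B_c$ leave $O$ through consecutive edges in cyclic order with $B_b$ in the middle; in particular the $B_a$- and $B_b$-edges share a vertex, as do the $B_b$- and $B_c$-edges.

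Next I would prove the statement for $B_a$ and $B_b$ by induction along the two bands, starting from $O$ and moving into the interior; since the bands are finite simple cycles, this terminates. For the base case, the faces $H_1$ and $F_1$ of $B_a$ and $B_b$ adjacent to $O$ meet at the shared vertex of the $B_a$- and $B_b$-edges of $O$, and I would show this vertex has degree three, so that $H_1$ and $F_1$ are edge-adjacent. For the inductive step, suppose the current faces $H$ on $B_a$ and $F$ on $B_b$ share an edge $e$, and let $H',F'$ be the next faces along the bands, meeting the band edges $\alpha\parallel a$ of $H$ and $\beta\parallel b$ of $F$ at a common vertex $q$. The four faces $H,F,F',H'$ sit around $q$ with $H,F$ known adjacent and $H,H'$ and $F,F'$ adjacent along the bands; the claim is that $q$ has degree exactly four, so that $H'$ and $F'$ close up the star of $q$ and are themselves edge-adjacent. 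The argument for $B_c$ and $B_b$ is identical after the reflection $v\mapsto -v$, which exchanges the roles of $B_a$ and $B_c$.

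The main obstacle, as in the proofs of Lemmas \ref{lem: int oct} and \ref{lem: sq bigon}, is ruling out a face wedging between the two bands, i.e.\ showing the relevant vertices cannot have higher degree. I would argue by contradiction: any extra edge at such a vertex spawns a band $B_d$ which, by minimality of the bigon, must cross both $B_v$ and $B_h$, forcing $|d\cdot v|,|d\cdot h|\in\{0,1/\sqrt2\}$; together with the fixed directions of $a$, $b$ and the shared edge $e$, all drawn from the eight admissible directions of Lemma \ref{lem: edge dirs}, these constraints admit no genuinely new direction $d$, exactly as impossible orthogonal or $45^\circ$ configurations are excluded earlier. A complementary and, I expect, decisive mechanism is that separating the bands tends to fold an adjacent square so that one of its edges becomes parallel to $v$ or $h$, a direction forbidden in the strict interior by Lemma \ref{lem: edge dirs}; this forces the intervening vertices to have the minimal degree. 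Extra care is needed where a band face is itself an octagon, so that the corridor between the bands could in principle widen, and near the exit of the bigon, where one band might reach $B_v$ or $B_h$ before the other; I expect both cases to be absorbed by the same dot-product and forbidden-direction bookkeeping, treating an octagon turning point as contributing one of the admissible $45^\circ$ crossings.
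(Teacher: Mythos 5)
Your first paragraph is correct and coincides with the opening step of the paper's proof: minimality forces every band leaving \(O\) to cross \(B_h\), the resulting dot-product constraints force the plane of \(O\) to be normal to \(h\), and the three interior-side directions are \(b=(0,0,\pm 1)\) and \(\tfrac{1}{\sqrt 2}(0,1,\pm 1)\); your derivation via the normal \(n\) is, if anything, tidier than the paper's appeal to Lemma \ref{lem: edge dirs}. The gap is everything after that. The entire content of the lemma is the exclusion of faces lying strictly between consecutive bands (equivalently, of wedge faces at your vertices \(p\) and \(q\)), and at exactly this point the proposal stops proving and starts predicting: ``I would show this vertex has degree three,'' ``the claim is that \(q\) has degree exactly four,'' ``I expect both cases to be absorbed by the same dot-product and forbidden-direction bookkeeping.'' The bookkeeping you describe does not close on its own. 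For instance, at the vertex \(p\) shared by the \(B_a\)- and \(B_b\)-edges of \(O\), the fan consisting of \(O\), a square with edge directions \(a,c\), a square with edge directions \(c,a\), and an octagon with edge directions \(a,b\) uses only the eight admissible directions and has a legal interior angle at every consecutive pair, so a degree-four vertex is not excluded by directions alone; ruling it out requires either a coplanarity/overlap argument (all four faces are forced into planes normal to \(h\), and a \(450^{\circ}\) flat fan makes two adjacent faces overlap) or following the two orthogonal bands through the wedge face across the bigon. Moreover, Lemma \ref{lem: edge dirs} does not constrain the \(h\)-parallel edges of faces that lie on \(B_h\) itself --- in the single octagonal prism, the interior faces on \(B_a\) and \(B_b\) genuinely carry \(h\)-parallel edges --- so your ``complementary mechanism'' of forbidding edges parallel to \(v\) or \(h\) breaks down precisely in the ``near the exit of the bigon'' case you defer.

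The paper closes this gap with a global argument that your outline would ultimately have to reproduce: supposing a face lies strictly between two of the bands, it takes the two orthogonal bands \(B_x,B_y\) through that face; minimality together with Lemma \ref{lem: edge dirs} pins down the directions \(x\) and \(y\); each of \(B_x,B_y\) must then cross the bounding band \(B_h\), and those crossings happen at octagons whose four edge directions are completely determined; comparing the edges along which \(B_x\) and \(B_y\) meet these octagons contradicts \(x\perp y\). A wedge face at your vertex \(q\) is exactly a face strictly between two bands, so each step of your induction needs this band-tracing (or an overlap) argument, not just vertex-local direction counting. As written, the proposal is a correct first step followed by an honest but unfilled outline of the part that constitutes the actual proof.
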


\begin{proof}
Choose coordinates so that the square bigon is formed by two bands, \(B_v\) and \(B_h\), with \(h=(1,0,0)\) and \(v=(0,1,0)\). Let \(O\) be the degree eight face in the statement of the theorem and without loss of generality assume that it lies on \(B_v\). Since the surface is topologically a sphere, by the Jordan curve theorem \(B_a, B_b\) and \(B_c\) must cross \(B_h\). Since \(a,b,c\) and \(v\) are the directions of four consecutive edges of a face whose realization is a regular octagon, we have 
\[|a\cdot b| = | b \cdot c|= |c \cdot v| =|a \cdot v| = 1/\sqrt{2}\]
and
\[|a\cdot c| = | b \cdot v|= 0.\]
Moreover \(B_a,B_b\) and \(B_c\) all pass through \(B_h\), so the dot product of each with \(h\) is either \(0\), \(1/\sqrt{2}\) or \(-1/\sqrt{2}\). From lemma \ref{lem: edge dirs} we know that there are eight possible choices for the directions of the edges. The only possible directions for \(a,b,\) and \(c\) that satisfy all of these conditions are
\[
a = 1/\sqrt{2}(0,1,-1), \
b=(0,0,1), \text{ and }
c= 1/\sqrt{2} (0,1,1).\
\]

Now suppose that there is a face in the interior of the minimal square bigon between \(B_b\) and \(B_c\). This face could have degree four or eight, but in both cases there are two bands through this face determined by unit vectors which are orthogonal to each other. Let \(B_x\) and \(B_y\) denote these two bands. Since the bigon is minimal, \(B_x\) must cross \(B_c\) or \(B_a\), and likewise for \(B_y\). Of the eight possible directions for \(x\) and \(y\), the only directions consistent with the crossing condition is \(x=1/\sqrt{2}(0,1,-1)\) and \(y=1/\sqrt{2}(0,1,1)\). Computing \(x \cdot h\) we find that \(B_x\) crosses \(B_h\) at a degree eight face. Let \(O_1\) be this face and let the directions of the edges of the realization be \(a',b',\) and \(c'\). The fourth direction is a parallel translate of \(B_h\) because \(O_1\) is on \(B_h\). A similar calculation to the one determining the directions \(a,b\), and \(c\) shows that the unit vectors determining these directions are
\[
a' = 1/\sqrt{2}(1,0,-1), \
b'=(0,0,1), \text{ and } \
c'= 1/\sqrt{2} (1,0,1).
\]
Likewise \(B_y\) also crosses \(B_h\) at a degree eight face, which we label \(O_2\). An identical argument shows that the same unit vectors as above determine the directions of the realizations of the edges of \(O_2\). However, computing the dot products we find \(B_x\) crosses \(O_1\) along the edge in the direction of \(b'\) and \(B_y\) crosses \(O_2\) along the edge in the direction of \(b'\). This is a contradiction because \(x\) and \(y\) are orthogonal directions.
\end{proof}

There are two remaining operations to define before we can prove theorem \ref{square-oct}. In the proof we analyze all minimal bigons and use polyhedral surgery to decrease either the number of faces in the surface or the number of degree eight faces in the surface. When a minimal square bigon has degree eight faces along its boundary, the bigon can be very complex to analyze. The two operations we introduce are the \emph{cube flip} and the \emph{prism flip}. Both allow us to decrease the number of faces in the bigon, thereby reducing the complexity of the bigon.

First, we define the cube flip. Let \(h\) be a turning point of a minimal square bigon and let \(f\) and \(g\) be the two faces in the interior of the bigon which are adjacent to \(h\). If all three faces have degree four, then in their realization they form three faces of a cube. In an operation we call a \emph{cube flip}, we replace these three faces in the RPS by the three faces that form the other half of the cube. Figure \ref{cflip} shows a subgraph of a surface graph and the effect of a cube flip on this subgraph. The geometric realization of the new RPS is defined by extending the geometric realization of the original surface to the three faces \(f',g'\) and \(h'\) linearly so that the realization of each face is a Euclidean square with unit edge lengths. 

\begin{figure}
  \begin{center}
    \includegraphics[scale=0.6]{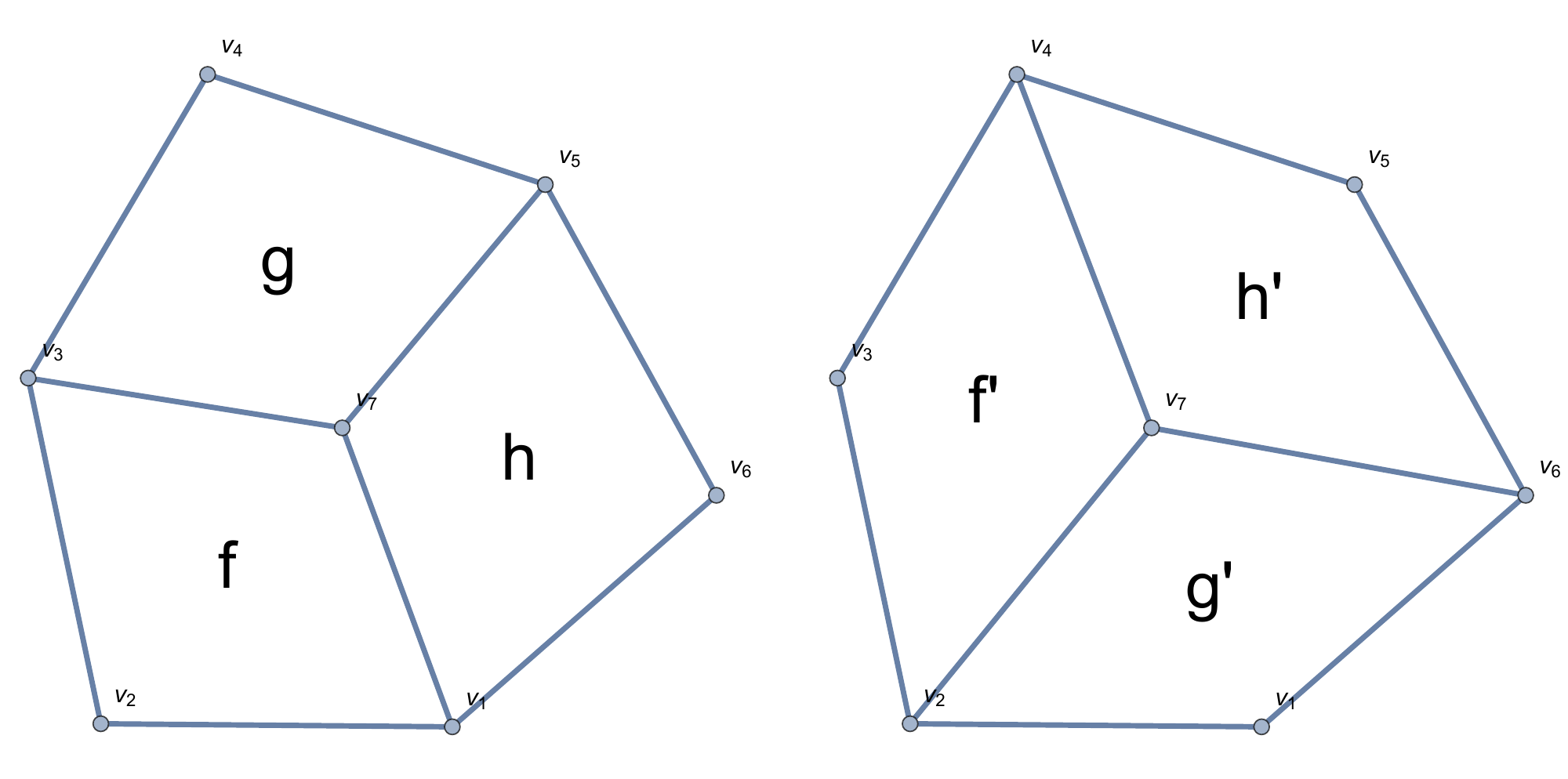}
\caption{Cube flip}
\label{cflip}
  \end{center}
\end{figure}

Suppose that there exists a minimal bigon and that a cube flip can be performing on one of the turning points. Let \(B_h\) and \(B_v\) denote the bands determining the bigon and \(f_1\) and \(f_2\) the two faces corresponding to the bigon's turning points. After a cube flip at \(f_1\), the bigon formed by \(B_h\) and \(B_v\), which passes through \(f_2\), also passes through one of the new faces formed by the cube flip. The new bigon is still a minimal bigon but it contains fewer bands through its interior than the original minimal bigon. 

The prism flip is defined similarly to the cube flip. In the prism flip we replace five faces in the RPS, whose realization forms part of an octagonal prism, with the five faces that form the other half of the prism. We skip a formal definition of the prism flip because it is so similar to the cube flip. A prism flip that involves the face at the turning point of the bigon reduces the number of bands that cross through the interior of the bigon. Finally, all of the tools are in place to prove theorem \ref{square-oct}.

\begin{proof}[Proof of theorem \ref{square-oct}.]
Let \(P\) be a RPS with data \((\Sigma, \Gamma, \psi)\). We use induction on the number of degree eight faces in the RPS. Let \(n\) be the number of degree eight faces in the surface. The base case, \(n=0\), is theorem \ref{thm: square}. Assume that the theorem is true for any RPS with fewer than \(n\) faces. Since the surface has a finite number of faces, there is always at least one minimal bigon. There are two cases depending on the type of the bigon. First, assume that the minimal bigon is an octagon bigon. Lemma \ref{lem: oct red} explains how octagonal removal surgery applied to this bigon produce a new RPS \(P'\) with two fewer degree eight faces than \(P\). By the induction hypothesis this surface can be realized as a union of cubes and prisms. Since the bigon that was removed can be realized as part of an octagonal prism, we can glue an octagonal prism to \(P'\) to form a surface which has the same surface graph and genus as \(P\). Thus \(P\) can be realized by a union of cubes and prisms. 

In the second case, the minimal bigon is a square bigon. Suppose that this bigon is determined by two bands, \(B_h\) and \(B_v\), and has a turning point at a face \(f\). Let \(g\) and \(h\) be the two faces in the interior of the minimal bigon which are adjacent to \(f\). If both \(g\) and \(h\) have degree four then we can use a cube flip to decrease the number of bands through this bigon. If one of \(g\) and \(h\) has degree four and the other degree eight, then by lemma \ref{lem: adj bands} there are five faces, including \(f\), whose realization is a part of an octagonal prism. Thus we can always use a cube flip or a prism flip to decrease the number of bands through the bigon. 

Since the bigon has finitely many faces, after finitely many flips the bigon will have one face which is adjacent to both turning points. If this face has degree four then we can remove a cube with polyhedral removal surgery and decrease the number of degree four faces in the surface by \(2\). If the face has degree eight then we could remove an octagonal prism with polyhedral removal surgery and decrease the number of degree eight faces in the surface by \(2\). This procedure reduces the number of degree four faces in the surface monotonically. However, we cannot remove all of the degree four faces from the surface. The surface has genus zero and thus has positive curvature vertices. At least two squares meet at every positive curvature vertex. Thus after removing finitely many faces from the surface, we must reach a surface \(P'\) in which the only minimal bigons are octagon bigons. Removing this octagon bigon and applying the induction hypothesis we find that the surface can be realized as a union of cubes and prisms. Since at every step we have removed either a part of a cube or a prism, we can glue these cubes and prisms back to \(P'\) to form a surface with the same surface graph and genus as \(P\). Therefore \(P\) can be realized as a union of cubes and prisms.

\end{proof}

\section{Examples of higher genus RPSs}\label{sec:counter}

In this section we construct three examples of high genus RPSs which are not unions of convex polyhedra. This can be seen by the absence of certain faces in the surfaces. All of the examples in this section can be constructed in two steps. First, place convex polyhedra at the vertices of a 3-cube or a 4-cube. Second, remove certain faces from each polyhedron and connect the boundary components using prisms with matching boundary components. Whether a 3-cube or a 4-cube is used depends on the structure of the convex polyhedra.

Figure \ref{truncOctMol4d} shows a genus 49 surface whose faces have degree four. It is constructed out of truncated octahedra placed at the vertices of a 4-cube and connected by hexagonal prisms. All hexagonal faces have been removed from the constituent truncated octahedra and hexagonal prisms. The Euler characteristic of the surface is
\begin{align*}
\chi & = 16\cdot 2-16\cdot 8-64\cdot 6+64\cdot 6 = -96.
\end{align*} 

\begin{figure}[h]
  \begin{center}
    \includegraphics[width=\textwidth]{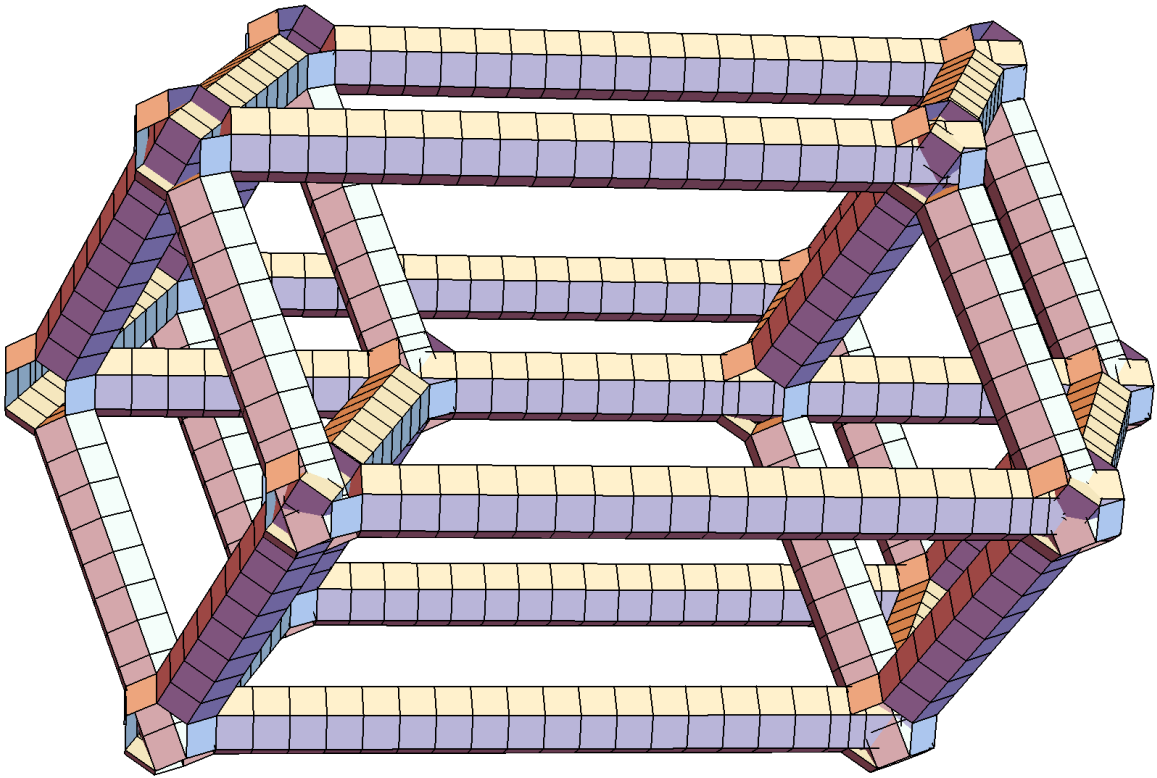}
\caption{A \((4)\)-RPS of genus 49}
\label{truncOctMol4d}
  \end{center}
\end{figure}

Figure \ref{truncCubOctMol4d} shows a genus 49 surface whose faces have degree four and eight. It is constructed out of truncated cuboctahedra placed at the vertices of a 4-cube and connected by hexagonal prisms. All hexagonal faces have been removed from the constituent truncated cuboctahedra and hexagonal prisms. The Euler characteristic of the surface is
\begin{align*}
\chi & = 16\cdot 2-16\cdot 8-64\cdot 6+64\cdot 6 = -96.
\end{align*} 

\begin{figure}[h]
  \begin{center}
    \includegraphics[width=\textwidth]{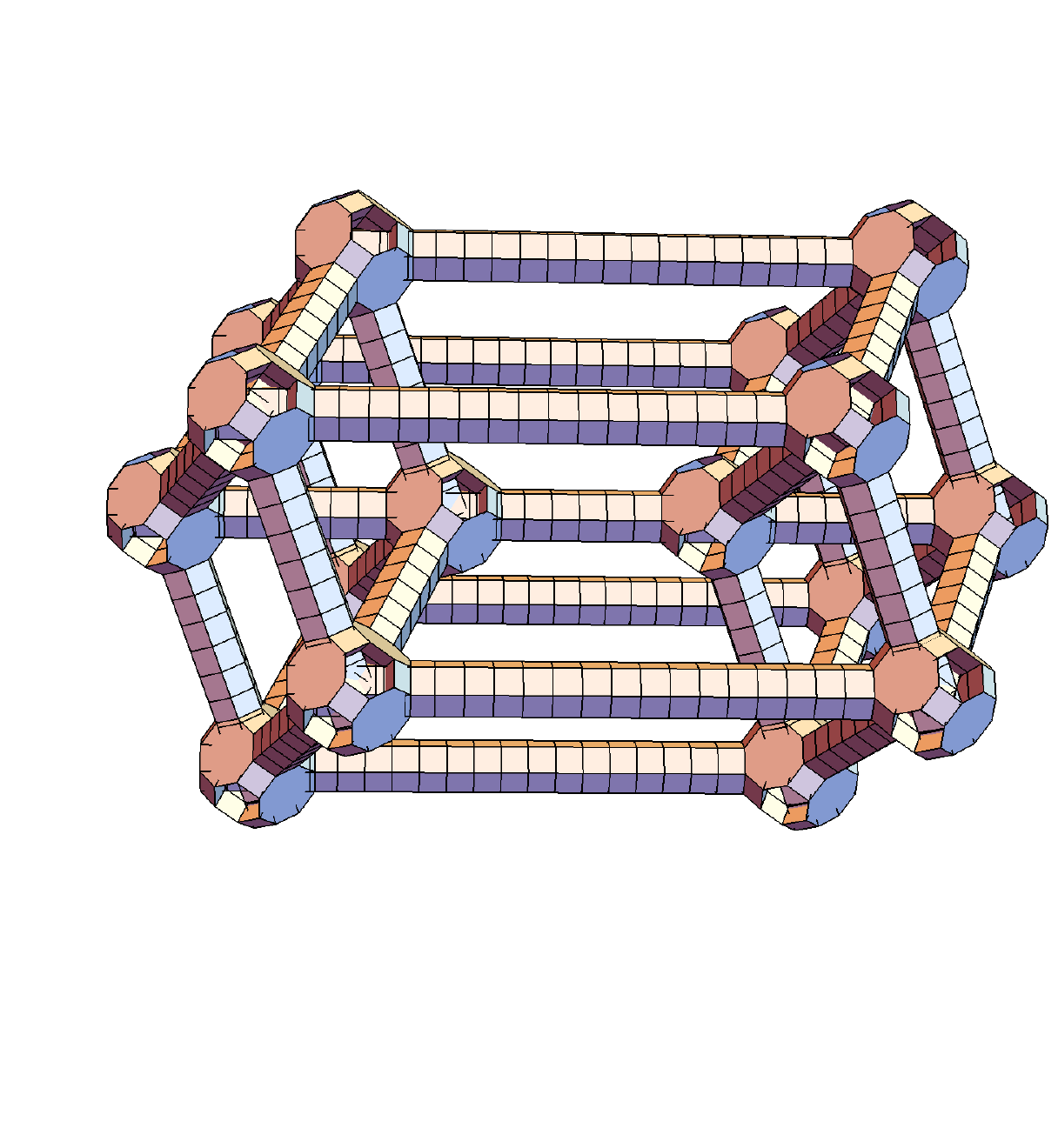}
\caption{A \((4,8)\)-RPS of genus 49}
\label{truncCubOctMol4d}
  \end{center}
\end{figure}

Figure \ref{truncCubOctMolSqHex} shows a genus 17 surface whose faces have degree four and eight. It is constructed out of truncated cuboctahedra placed at the vertices of a 3-cube and connected by octagonal prisms. All octagonal faces have been removed from the constituent truncated cuboctahedra and hexagonal prisms. The Euler characteristic of the surface is
\begin{align*}
\chi & = 8\cdot 2 - 8\cdot 6 -24\cdot 8+24\cdot 8 = -32.
\end{align*}

\begin{figure}[h]
  \begin{center}
    \includegraphics[width=\textwidth]{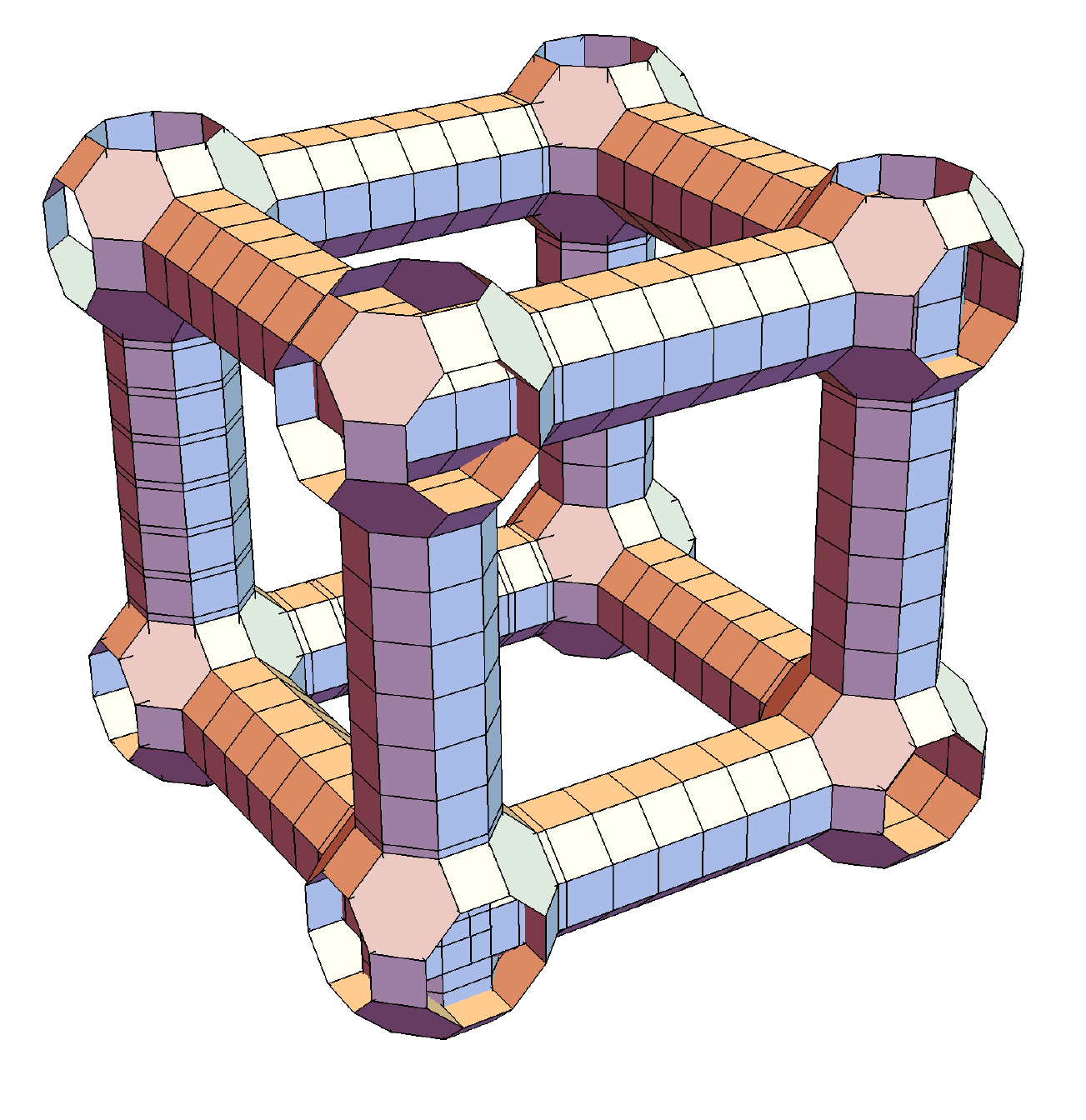}
\caption{A \((4,6)\)-RPS of genus 17}
\label{truncCubOctMolSqHex}
  \end{center}
\end{figure}


\subsection*{Acknowledgments}   
I thank Richard Kenyon for suggesting the study of RPSs and for his advice throughout this project. I thank Sanjay Ramassamy for suggesting a simplification of the proof in Section \ref{sec:rps5}. I also thank Ren Yi for many helpful conversations. 

\bibliographystyle{../hep}
\bibliography{../mybib}

%
%
%
%

\end{document}